\newtheorem{theorem}{Theorem}[section]
\newtheorem{lemma}{Lemma}[section]
\theoremstyle{definition}
\title{Theoretical and numerical indirect  stabilization of coupled wave equations with a single time-delayed damping}
\author{
	{\sc Alhabib Moumni}\\
	Moulay Ismail University of Meknes,\\
	FST Errachidia, MAIS Laboratory, MAMCS Group,\\
	P.O. Box 509, Boutalamine 52000, Errachidia, Morocco\\
	email: alhabibmoumni@gmail.com\\
		{\sc Mohamed Mehdaoui}
	\\
	Moulay Ismail University of Meknes,\\
FST Errachidia, MAIS Laboratory, MAMCS Group,\\
P.O. Box 509, Boutalamine 52000, Errachidia, Morocco\\
email: m.mehdaoui@edu.umi.ac.ma\\
	{\sc Jawad Salhi}\\
	Moulay Ismail University of Meknes,\\
	FST Errachidia, MAIS Laboratory, MAMCS Group,\\
	P.O. Box 509, Boutalamine 52000, Errachidia, Morocco\\
	email: j.salhi@umi.ac.ma\\
{\sc Mouhcine Tilioua}\\
Moulay Ismail University of Meknes,\\
FST Errachidia, MAIS Laboratory, MAMCS Group,\\
P.O. Box 509, Boutalamine 52000, Errachidia, Morocco\\
email: m.tilioua@umi.ac.ma}
\date{}
\begin{document}
	
	\maketitle
	\begin{abstract}
		The focal point of this paper is to theoretically investigate and numerically validate the effect of time delay on the exponential stabilization of a class of coupled hyperbolic systems with delayed and non-delayed dampings. The class in question consists of two strongly coupled wave equations featuring a delayed and non-delayed damping terms on the first wave equation. Through a standard change of variables and semi-group theory, we address the well-posedness of the considered coupled system. Thereon, based on some observability inequalities, we derive sufficient conditions guaranteeing the exponential decay of a suitable energy. On the other hand, from the numerical point of view, we validate the theoretical results in $1D$ domains based on a suitable numerical approximation obtained through the Finite Difference Method. More precisely, we construct a discrete numerical scheme which preserves the energy decay property of its  continuous counterpart. Our theoretical analysis and implementation of our developed numerical scheme assert the effect of the time-delayed damping on the exponential stability of strongly coupled wave equations.
	\end{abstract}
	
	Keywords: Stabilization, Hyperbolic systems, Delay feedbacks, Numerical Analysis
	
	MSC 2020:  93B07, 35L05, 93C05, 93D15, 65M06, 93C20
	\section{Introduction}\label{intro}
	
	Stabilization problems for hyperbolic equations have drawn the attention of several researchers over decades, focusing on finding a stabilizing control input that induces decay in the system's energy over time. Classical research in the context of scalar wave equations has laid the groundwork for understanding such stabilization mechanisms, as highlighted by \cite{Bardos, Nicaise, chen1979control, tebou1998stabilization, d2001weighted, komornik1994decay, liu1999decay} and the references therein. Unlike their scalar counterpart, coupled wave equations offer a more delicate representation by capturing the interdependence among the multiple components of the state of the dynamics, making them crucial in real-world applications. From the physical point of view, such interdependence can be captured by weak coupling involving displacements, and strong coupling involving velocities. 
	
	Research into the stabilization of coupled hyperbolic systems has expanded with substantial contributions \cite{Alabau2013, Alabau2003, Alabau2002, Alabau-Leautaud2013,Avdonin, Bennour, Gerbi2021, Koumaiha2021, Mokhtari, LiuRao2009, Wehbe2011}. These pioneering contributions have achieved, under different settings, that the stabilization of coupled wave equations can be theoretically achieved by controlling only one component of the state. However, they were all based on the simplifying assumption of instantaneous control effects which, in some cases, does not align with practical scenarios where time delays are inevitable due to slower communication and device responses. That being said, the incorporation of time delays into the stabilization framework has proven to be crucial for practical applications, leading to the introduction of delay terms in the control input. This has been investigated in scalar wave equations by several authors \cite{xu2006stabilization, pignotti2012note, nicaise2006stability, nicaise2010stabilization, ammari2010feedback, adimy2003global, enrike1990exponential, ammari2023note, ammari2022well}. However, addressing time delays in coupled wave equations remains less explored. Existing literature for weakly coupled systems includes theoretical studies which can be found in \cite{oliveira2020stability, xu2020uniform, rebiai2014exponential}, while for strongly coupled systems, recent theoretical advancements can be found in \cite{akil2020stability, akil2021stability, silga2022indirect}. Indeed, in \cite{akil2020stability}, the authors employed a frequency domain approach revealing a polynomial energy decay rate, while in \cite{akil2021stability}, they investigated stabilization with local viscoelastic damping. On the other hand, in \cite{silga2022indirect} the authors studied the impact of distributed time delays on the boundary stabilization of multi-dimensional systems. As far as the numerical analysis of the theoretical stabilization results of strongly coupled wave equations is concerned, to the best of our knowledge, the only existing work can be found in \cite{gerbi2021numerical}. As a matter of fact, the authors established a numerical scheme preserving the energy dissipating property for locally strongly coupled wave equations with a single non-delayed damping. Hence, our second main objective is to extend the approach used in \cite{gerbi2021numerical} to the case of locally strongly coupled wave equations with delayed and non-delayed dampings. 

	Taking the preceding discussion into account, this paper contributes to both theoretical and numerical aspects of the stabilization of strongly coupled wave equations by first deriving conditions for exponential energy decay from a theoretical perspective and then validating these findings numerically through a finite difference numerical scheme designed to preserve the energy decay properties of the continuous system. This ensures that the discrete model accurately reflects the decaying behavior observed theoretically. The continuous system in question is governed by the following two coupled wave equations: 
	\begin{equation}\label{coupledstabilityproblem}
	\begin{cases}
	u_{tt}(x,t)-\Delta u(x,t)+b(x) y_{t}(x,t)+a(x)\left[\mu_1 u_t(x,t)+\mu_2 u_t(x,t-\tau)\right]=0, & \text{in } \Omega \times (0, \infty), \\ 
	y_{tt}(x,t)-\Delta y(x,t)-b(x) u_{t}(x,t)=0, & \text{in }\Omega \times (0, \infty), \\ 
	u(x,t)=y(x,t)=0, & \text {on }\partial\Omega \times (0, \infty),\\
	u(x,0)=u_{0}(x), \quad u_t(x,0)=u_1(x), & \text{in } \Omega, \\ 
	y(x,0)=y_{0}(x), \quad y_t(x,0)=y_1(x),  & \text{in } \Omega,\\
	u_t(x,t)=f_0(x,t), &  \text{in } \Omega \times (-\tau,0),
	\end{cases}
	\end{equation}
	wherein $\Omega$ is an open bounded domain of $\mathbb{R}^n \;(n \geq 1)$, with a boundary $\partial \Omega$ of class $C^2$. Moreover,  $a \in L^{\infty}(\Omega)$ is a function such that
	$$
	a(x) \geq 0 \text { a. e. in } \Omega,
	$$
	and
	$$
	a(x)>a_0>0 \quad \text { a. e. in } \omega,
	$$
	where $\omega$  is an open nonempty subset of $\Omega.$

	\noindent
	 On the other hand $(u_0, u_1, y_0, y_1,f_0)$ denotes the initial state which belongs to a suitable space which will be precised in the sequel. Additionally, $b\in W^{1,\infty}(\Omega)$ represents the coupling parameter which is assumed to satisfy the following condition: 
\begin{equation}
\label{conditionb}
\overline{\omega_b}\subset\left\{x\in\Omega:\quad b(x)\neq0\right\},
	\end{equation}
for a non empty open $\omega_b\subset\Omega$. Finally, $\mu_1, \mu_2$ are real positive numbers, while $\tau>0$ stands for the time delay.

In order to put System \eqref{coupledstabilityproblem} into perspective with earlier contributions, let us mention that in the absence of delay, that is, $\mu_2=0$, Gerbi et al. \cite{Gerbi2021,gerbi2021numerical} investigated the stabilization of locally coupled wave equations. More precisely, by employing a frequency domain approach along with a multiplier technique, they established the system's exponential stability under the condition that the coupling region is a subset of the damping region and satisfies the geometric control condition (GCC) as defined in \cite[Definition 1]{Gerbi2021} or \cite{liu1997locally}. For the reader's convenience, here, we briefly recall that a subset $\omega$ of $\Omega$ satisfies the GCC if every ray of the geometrical optics originating from any point $x \in \Omega$ at $t=0$ enters the region $\omega$ within a finite time $T>0$. On the other hand, in the case of a single wave equation, the exponential decay result was derived by Nicaise and  Pignotti in \cite{nicaise2006stability}. More precisely, by considering the following scalar wave equation 
\begin{equation}
	\label{singleproblem}\begin{cases}
		u_{t t}(x, t)-\Delta u(x, t)+a(x)\left[\mu_1 u_t(x,t)+\mu_2 u_t(x,t-\tau)\right]=0, & \text{in } \Omega \times (0, \infty), \\ 
		u(x,t)=0, & \text {on }\partial\Omega \times (0, \infty),\\
		u(x,0)=u_{0}(x), \quad u_t(x,0)=u_1(x), & \text{in } \Omega, \\ 
		u_t(x,t)=f_0(x,t), &  \text{in } \Omega \times (-\tau,0),
	\end{cases}
\end{equation}
featuring a local damping applied to a single equation, the authors established that the exponential stability can be maintained for the whole system. 

Thus, the theoretical contributions of our work are to the extension of the early theoretical results presented in \cite{Gerbi2021,nicaise2006stability} to the case given by System \eqref{coupledstabilityproblem}. To this end, in line with the earlier contributions \cite{Gerbi2021,gerbi2021numerical}, in this paper, we assume that $\omega$ satisfies the geometric control condition GCC, furthermore we consider the following assumption 
	\begin{equation}
	\label{conditionbdamping}
	\omega_b\subset \omega.
	\end{equation}
Additionally, as System \eqref{coupledstabilityproblem} shows, we assume that only one of the two components of the unknown is under feedback, in order to investigate whether it is possible to stabilize the complete vector state solution to the coupled hyperbolic System \eqref{coupledstabilityproblem}, for a coupling function $b$ satisfying \eqref{conditionb}, by using a single damping. As far as the numerical contribution of our work in concerned, we aim to extend the approach developed in \cite{gerbi2021numerical}, in the absence of time-delayed damping, to the case of the coupled hyperbolic System \eqref{coupledstabilityproblem} where such time-delayed damping is present.
	
The rest of this paper is organized as follows. In Section \ref{section2}, we deal with the well-posedness  of Problem \eqref{coupledstabilityproblem} by means of semigroup theory. In Section \ref{section3}, we explore the exponential decay properties of Problem \eqref{coupledstabilityproblem} under some conditions correlating  $\mu_1$ and $\mu_2$. In Section \ref{section4}, we formulate a numerical approximation of Problem \eqref{coupledstabilityproblem} based on a finite difference discretization scheme and prove that it preserves the energy decaying property. Finally, in Section \ref{section5}, we validate the theoretical results by performing various numerical simulations for different damping parameters. 
	
In what follows, $C$ will signify a generic positive constant, whose value may differ across various situations.
	\section{Well-posedness}\label{section2}
	Before addressing the questions related to stability  issues, we first deal with the well-posedness of System \eqref{coupledstabilityproblem}. 
	As in \cite{nicaise2006stability}, let us introduce the following function:
	$$
	z(x, \rho, t)=u_t(x, t-\tau \rho), \quad x \in \Omega,\text{ } \rho \in(0,1),\text{ } t>0.
	$$
	Then, Problem \eqref{coupledstabilityproblem} is equivalent to
	\begin{equation}\label{problemwithoutdelay}
	\begin{cases}
	u_{tt}(x,t)-\Delta u(x,t)+b y_{t}(x,t)+a(x)\left[\mu_1u_t(x,t)+\mu_2 z(x,1,t)\right]=0, & \text{in } \Omega \times (0, \infty), \\ 
	y_{tt}(x,t)-\Delta y(x,t)-b u_{t}(x,t)=0, & \text{in }\Omega \times (0, \infty), \\
	\tau z_{t}(x,\rho,t)+z_{\rho}(x,\rho,t)=0,  & \text{in }\Omega \times (0,1)\times  (0, \infty), \\ 
	u(x,t)=y(x,t)=0, & \text {on }\partial\Omega \times (0, \infty),\\
	z(x,0,t)=u_t(x,t), & \text{in }\Omega \times (0, \infty), \\
	u(x,0)=u_{0}(x), \quad u_t(x,0)=u_1(x), & \text{in } \Omega, \\ 
	y(x,0)=y_{0}(x), \quad y_t(x,0)=y_1(x),  & \text{in } \Omega,\\
	z(x,\rho,0)=f_0(x,-\rho \tau), &\text{in } \Omega \times (0,1).
	\end{cases}
	\end{equation}
We	consider the following energy space associated to System \eqref{problemwithoutdelay}:
\begin{equation}\label{energyspace}
\mathcal{H}=\left(H_{0}^{1}(\Omega)\times L^2(\Omega)\right)^2\times L^2(\Omega \times (0, 1)),
\end{equation}
with the scalar product
$$
\langle U, \widetilde{U}\rangle_{\mathcal{H}}=\int_{\Omega}( \nabla u_{1}\nabla \widetilde{u}_{1}+u_{2} \widetilde{u}_{2}+\nabla u_{3} \nabla\widetilde{u}_{3}+u_{4} \widetilde{u}_{4})\,dx+\xi\int_{\Omega}\int_{0}^{1}z(x,\rho)\widetilde{z}(x,\rho),
$$
for all $U=(u_{1}, u_{2}, u_{3}, u_{4},z), \widetilde{U}=(\widetilde{u}_{1}, \widetilde{u}_{2}, \widetilde{u}_{3}, \widetilde{u}_{4},\widetilde{z}) \in \mathcal{H},$ where $\xi$ is a positive number.

Next, we define the linear unbounded operator $\mathcal{A}: D(\mathcal{A})\subset \mathcal{H} \rightarrow \mathcal{H}$ by
\begin{equation}\label{theoperator}
\begin{cases}
D(\mathcal{A})&=\left\{(u_{1}, u_{2}, u_{3}, u_{4},z)\in ((H^2(\Omega)\cap H^1_0(\Omega))\times H^{1}_{0}(\Omega))^2\times L^2(\Omega ;H^1(0,1)),\right.\\
&\left. \quad z(.,0)=u_2(.) \text{ in }\Omega\right\},\\
\mathcal{A} U&=(u_{2},\Delta u_{1}-b u_{4}-a\mu_1 u_2-a\mu_2z(.,1), u_{4}, \Delta u_{3}+b u_{2},-\tau^{-1}z_{\rho}), \\
&\quad \forall U=(u_{1}, u_{2}, u_{3}, u_{4},z) \in D(\mathcal{A}).
\end{cases}
\end{equation}
Thus, System \eqref{problemwithoutdelay} can be rewritten as an abstract Cauchy problem. Indeed, setting $U(t)=(u(t),u_t(t),y(t),y_t(t),z(t))$, we obtain
\begin{equation}\label{cauchyproblem}
\begin{cases}
U^\prime(t)&= \mathcal{A} U(t)\quad t\geq0,\\
U(0)&=U_0,
\end{cases}
\end{equation}
where $U(0)=(u_0,u_1,y_0,y_1,f_0(.,-.\tau))$.

The existence and uniqueness result reads as follows.
\begin{theorem}\label{wellposed}
	For any $U_{0}=\left(u_{0}, u_{1}, y_{0}, y_{1},f_0\right) \in \mathcal{H}$, there exists a unique solution $U=(u,u_t,y,y_t,z)\in C^{0}([0,\infty); \mathcal{H})$ of System \eqref{cauchyproblem}. Moreover, if $U_{0}=\left(u_{0}, u_{1}, y_{0}, y_{1},f_0\right) \in D(\mathcal{A})$, then
	$$
	U \in C^{0}([0,\infty); D(\mathcal{A})) \cap C^{1}([0,\infty); \mathcal{H}).
	$$
\end{theorem}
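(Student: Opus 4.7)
The plan is to apply the Lumer--Phillips theorem to the operator $\mathcal{A}$ defined in \eqref{theoperator}, possibly after a translation. Since $D(\mathcal{A})$ is straightforwardly dense in $\mathcal{H}$, it suffices to show that $\mathcal{A}-cI$ is m-dissipative on $\mathcal{H}$ for some constant $c\geq 0$, which then implies that $\mathcal{A}$ generates a $C_0$-semigroup $\{e^{t\mathcal{A}}\}_{t\geq 0}$ on $\mathcal{H}$. From this semigroup, the two regularity statements of the theorem follow by standard semigroup theory.

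For the (shifted) dissipativity, I would fix the weight $\xi>0$ in the scalar product (taking $\xi$ close to $\tau\mu_2$) and compute $\langle \mathcal{A}U, U\rangle_{\mathcal{H}}$ for $U=(u_1,u_2,u_3,u_4,z)\in D(\mathcal{A})$. Green's formula together with the homogeneous Dirichlet conditions handles the Laplacian terms, while the skew-symmetric coupling $\pm b u_{2/4}$ cancels by inspection. For the transport component $-\tau^{-1}z_\rho$, an integration by parts on $(0,1)$ using the compatibility $z(\cdot,0)=u_2$ produces boundary contributions at $\rho=0$ and $\rho=1$. What remains is a quadratic form in $u_2$ and $z(\cdot,1)$ weighted by $\mu_1,\mu_2$ and $\xi$, which Young's inequality controls: if $\mu_2<\mu_1$ one obtains outright dissipativity by choosing $\xi\in(\tau\mu_2,\tau(2\mu_1-\mu_2))$, otherwise a shift $\mathcal{A}-cI$ with $c$ large enough works.

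The main obstacle will be the range condition: for some $\lambda>0$, the equation $(\lambda I-\mathcal{A})U=F$ must be solvable in $D(\mathcal{A})$ for every $F=(f_1,f_2,f_3,f_4,g)\in\mathcal{H}$. Unwinding it, one gets $u_2=\lambda u_1-f_1$, $u_4=\lambda u_3-f_3$, and the transport equation $\lambda z+\tau^{-1}z_\rho=g$ with $z(\cdot,0)=u_2$, which is solved explicitly by $z(x,\rho)=u_2(x)e^{-\lambda\tau\rho}+\tau\int_0^\rho e^{-\lambda\tau(\rho-s)}g(x,s)\,ds$, expressing $z(\cdot,1)$ linearly in $u_2$. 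Substituting back yields a coupled elliptic system for $(u_1,u_3)\in H^1_0(\Omega)^2$ of the form $-\Delta u_1+\lambda^2 u_1+\lambda a(\mu_1+\mu_2 e^{-\lambda\tau})u_1+\lambda b u_3=\tilde f_1$ (and the symmetric companion with $-b$), plus explicit $L^2$ right-hand sides. I would establish existence via Lax--Milgram applied to the natural bilinear form on $H^1_0(\Omega)\times H^1_0(\Omega)$: the cross coupling is skew and therefore innocuous, whereas coercivity is ensured for $\lambda$ sufficiently large. Elliptic regularity then lifts $u_1,u_3$ to $H^2(\Omega)\cap H^1_0(\Omega)$, and the constructed $z$ belongs to $L^2(\Omega;H^1(0,1))$ with the required trace at $\rho=0$, so $U\in D(\mathcal{A})$.

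With density, (shifted) dissipativity, and maximality verified, Lumer--Phillips yields the generation result. The existence of $U\in C^0([0,\infty);\mathcal{H})$ for $U_0\in\mathcal{H}$ and of the classical solution $U\in C^0([0,\infty);D(\mathcal{A}))\cap C^1([0,\infty);\mathcal{H})$ for $U_0\in D(\mathcal{A})$ are then immediate from the standard theory of $C_0$-semigroups.
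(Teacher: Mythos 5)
Your proposal follows the paper's proof essentially verbatim: a shifted Lumer--Phillips argument with the same dissipativity computation, the same explicit integration of the transport equation giving $z(\cdot,1)$ in terms of $u_2$, the same elimination of $u_2,u_4$ leading to a Lax--Milgram problem for $(u_1,u_3)$ on $H_0^1(\Omega)\times H_0^1(\Omega)$ (where the skew coupling vanishes on the diagonal, so coercivity in fact holds for every $\lambda>0$, not just large $\lambda$), and elliptic regularity to land in $D(\mathcal{A})$. One small caveat: with the paper's inner product, whose $z$-component is \emph{not} weighted by $a(x)$, the boundary term $\frac{\xi\tau^{-1}}{2}\int_\Omega u_2^2\,dx$ cannot be absorbed by $-\mu_1\int_\Omega a\,u_2^2\,dx$ on the set where $a$ vanishes, so your parenthetical claim of outright dissipativity when $\mu_2<\mu_1$ fails in this setting and the shift $\mathcal{A}-cI$ you offer as fallback is actually necessary --- which is precisely the route the paper takes.
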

\begin{proof}
		We will prove that the operator $\mathcal{A}$ defined in \eqref{theoperator} generates a strongly continuous semi-group of contractions on the Hilbert
	space $\mathcal{H}$ defined by \eqref{energyspace}. As in \cite{pignotti2012note} and 
	according to \cite[Proposition 2.2.6]{Cazenave1998}, it is sufficient to prove that there exists a positive constant $c$ such that  the unbounded operator $\mathcal{A}-cI$ is dissipative on $\mathcal{H}$  and that  $\lambda I- \mathcal{A}$ is surjective for some $\lambda>0$, where
	$$
	I= \operatorname{diag}(Id,Id,Id,Id,Id),
	$$
	where $Id$ denotes the identity operator.

	\underline{$\mathcal{A}-cI$ is dissipative for some $c>0$.} Let $U=(u_1,u_2,u_3,u_4,z)\in D(\mathcal{A})$. Then, by integrating by parts, we obtain
	\begin{align*}
	\langle \mathcal{A}U,U\rangle_{\mathcal{H}} = &\langle \big(u_{2},\Delta u_{1}-b u_{4}-a\mu_1u_2-a\mu_2z(.,1), u_{4}, \Delta u_{3}+b u_{2},-\tau^{-1}z_{\rho}\big),\big(u_1,u_2,u_3,u_4,z\big) \rangle_{\mathcal{H}}\\
	=&\int_{\Omega}\Big(\nabla u_1\nabla u_2+(\Delta u_{1}-b u_{4}-a\mu_1u_2-a\mu_2z(.,1))u_2+\nabla u_3\nabla u_4+(\Delta u_{3}+b u_{2})u_4\Big) dx\\
	&-\xi\tau^{-1}\int_{\Omega}\int_{0}^{1}z(x,\rho)z_{\rho}(x,\rho)\,dx \,d\rho\\
	=&-\mu_1\int_{\Omega}a u_2^2(x)\,dx-\mu_2\int_{\Omega}az(x,1)u_2(x)\,dx+\frac{\xi\tau^{-1}}{2}\int_{\Omega}z^2(x,0)\,dx-\frac{\xi\tau^{-1}}{2}\int_{\Omega}z^2(x,1)\,dx\\
	=&-\mu_1\int_{\Omega}a u_2^2(x)\,dx-\mu_2\int_{\Omega}az(x,1)u_2(x)\,dx+\frac{\xi\tau^{-1}}{2}\int_{\Omega}u_2^2(x)\,dx-\frac{\xi\tau^{-1}}{2}\int_{\Omega}z^2(x,1)\,dx\\
	\leq &\left(\frac{\xi\tau^{-1}}{2}+\frac{\mu_2^2\|a\|^2_{L^{\infty}(\Omega)}}{2\xi\tau^{-1}}\right)\int_{\Omega}u_2^2(x)\,dx.	
	\end{align*}
Taking $c=\frac{\xi\tau^{-1}}{2}+\frac{\mu_2^2\|a\|^2_{L^{\infty}(\Omega)}}{2\xi\tau^{-1}},$ we get that 
	$$\langle (\mathcal{A}-cI)U,U\rangle_{\mathcal{H}}\leq0.$$
	Therefore the operator $\mathcal{A}-cI$ is dissipative.
	
	\underline{$\lambda I-\mathcal{A}$ is surjective for any $\lambda>0.$} Given $F=\left(f_{1}, f_{2}, f_{3}, f_{4},h\right) \in \mathcal{H}$,
	we seek  $U=\left(u_{1}, u_{2}, u_{3}, u_{4},z\right) \in D(\mathcal{A})$ such that
	\begin{equation}\label{probdisspdege}
	\lambda U-\mathcal{A} U=F \Leftrightarrow
	\begin{cases}
	\lambda u_{1}-u_{2} &=f_{1}, \\
	\lambda u_{2}-\Delta u_1+b u_{4}+a(\mu_1u_2+\mu_2z(.,1)) &=f_{2}, \\
\lambda 	u_{3}-u_{4} &=f_{3}, \\
\lambda 	u_{4}-\Delta u_3-b u_{2} &=f_{4},\\
\lambda z+\tau^{-1} z_{\rho}&=h.
	\end{cases}
	\end{equation}
	Analogously to \cite{pignotti2012note}, we suppose that we have found $u_1$  with the right regularity. Then, we set
	\begin{equation}
	\label{u2byu1}
	u_2=\lambda u_1-f_1
	\end{equation}
	and aim to determine $z$.
	From the definition of $\mathcal{D}(\mathcal{A})$, we aim to find $z$ such that
$$
	z(x, 0)=u_2(x), \text { for } x \in \Omega;
	$$
	and from \eqref{probdisspdege},
	$$
	\lambda z(x, \rho)+\tau^{-1} z_\rho(x, \rho)=h(x, \rho), \text { for } x \in \Omega,\text{ } \rho \in(0,1).
	$$
	
	Consequently,  we obtain
	$$
	z(x, \rho)=e^{-\lambda \rho \tau} u_2(x)+\tau e^{-\lambda \rho \tau} \int_0^\rho h(x, \sigma) e^{\lambda \sigma \tau} d \sigma.
	$$
	
	Using \eqref{u2byu1} we get
	\begin{equation}
	\label{zdelay}
	z(x, \rho)=\lambda u_1(x) e^{-\lambda \rho \tau}-f_1(x) e^{-\lambda \rho \tau}+\tau e^{-\lambda \rho \tau} \int_0^\rho h(x, \sigma) e^{\lambda \sigma \tau} d \sigma
	\end{equation}
	and in particular
	$$
	z(x, 1)=\lambda u_1(x) e^{-\lambda \tau}+z_0(x)
	$$
	where $z_0 \in L^2\left(\Omega\right)$ is defined by
	$$
	z_0(x)=-f_1(x) e^{-\lambda \rho \tau}+\tau e^{-\lambda \rho \tau} \int_0^1 h(x, \sigma) e^{\lambda \sigma \tau} d \sigma.
	$$
	Eliminating $u_{2}$ and $u_{4}$ in \eqref{probdisspdege}, we get the following system
	\begin{equation}
	\label{prbfidege}
	\begin{cases}
	\lambda^2u_{1}-\Delta u_1+b\lambda u_{3}+a\mu_1\lambda u_1+a\mu_2\lambda e^{-\lambda\tau}&=\lambda f_{1}+f_{2}+b f_{3}+a\mu_1 f_1-a\mu_2z_0, \\
	\lambda^2u_{3}-\Delta u_3-b\lambda u_{1}&=\lambda f_{3}-b f_{1}+f_{4}.
	\end{cases}
	\end{equation}
	Now, we consider the bilinear form $\Gamma:\left(H_{0}^{1}(\Omega)\times H_{0}^{1}(\Omega)\right)^{2} \rightarrow \mathbb{R}$ given by
	$$
	\begin{aligned}
	\Gamma\left(\left(u_{1}, u_{3}\right),\left(\phi_{1}, \phi_{2}\right)\right)=&\int_{\Omega}(\lambda^2u_{1} \phi_{1}+\nabla u_{1} \nabla\phi_{1} +\lambda^2u_{3} \phi_{2}+\nabla u_{3} \nabla\phi_{2})d x+\mu_1\lambda\int_{\Omega}au_1\phi_1\,dx\\
	&+\mu_2\lambda e^{-\lambda\tau}\int_{\Omega}u_1\phi_1\,dx+b\lambda\int_{\Omega} \left(u_{3} \phi_{1}-u_{1} \phi_{2}\right)\,dx
	\end{aligned}
	$$
	and the linear form $L:H_{0}^{1}(\Omega)\times H_{0}^{1}(\Omega)\rightarrow \mathbb{R}$ given by
	$$
	L\left(\phi_{1}, \phi_{2}\right)=\int_{\Omega}\left(\lambda f_{1}+f_{2}+b f_{3}+a\mu_1 f_1-a\mu_2z_0\right) \phi_{1}\,dx + \int_{\Omega}\left(\lambda f_{3}-b f_{1}+f_{4}\right) \phi_{2}\,dx.
	$$
	In view of Poincaré inequality, $\Gamma$ is a continuous bilinear form on $H_{0}^{1}(\Omega)\times H_{0}^{1}(\Omega)$ and $L$ is a continuous linear functional on $H_{0}^{1}(\Omega)\times H_{0}^{1}(\Omega)$. Moreover, it is easy to check that $\Gamma$ is also coercive on $H_{0}^{1}(\Omega)\times H_{0}^{1}(\Omega)$ for any $\lambda>0.$ Indeed, let $(u_1,u_3)\in H_{0}^{1}(\Omega)\times H_{0}^{1}(\Omega)$ 
	$$\begin{aligned}
\Gamma((u_1,u_3),(u_1,u_3))=&\int_{\Omega}\left(\lambda^2u_1^2+|\nabla u_1|^2+\lambda^2u_3^2+|\nabla u_3|^2\right)\,dx+\mu_1\lambda \int_{\Omega}au_1^2\,dx+\mu_2\lambda e^{-\lambda \tau}\int_{\Omega}au_1^2\,dx\\
	\geq& \int_{\Omega}\left(\lambda^2u_1^2+|\nabla u_1|^2+\lambda^2u_3^2+|\nabla u_3|^2\right)\,dx\\
	&\geq \|(u_1,u_3)\|_{H^{1}_{0}(\Omega)\times H^{1}_{0}(\Omega)}^2.
	\end{aligned}$$
	By applying the Lax–Milgram theorem, we deduce that there exists a unique solution $\left(u_{1}, u_{3}\right) \in H_{0}^{1}(\Omega)\times H_{0}^{1}(\Omega)$ of
	\begin{equation}\label{pv}
	\Gamma\left(\left(u_{1}, u_{3}\right),\left(\phi_{1}, \phi_{2}\right)\right)= L\left(\phi_{1}, \phi_{2}\right)\, \text{for all } (\phi_1,\phi_2)\in H_{0}^{1}(\Omega)\times H_{0}^{1}(\Omega).
	\end{equation}
	 If $(\phi_1,\phi_{2})\in \mathcal{D}(\Omega)\times \mathcal{D}(\Omega)$, then $(u_1,u_3)$ solves \eqref{prbfidege} in $\mathcal{D}^{\prime}(\Omega)\times \mathcal{D}^{\prime}(\Omega)$ and so $(u_1,u_3)\in \left(H^{2}(\Omega)\right)^{2}$.
	 	Now, take $(u_2,u_4):=(\lambda u_1-f_1,\lambda u_3-f_3)$; then $(u_2,u_4)\in \left(H_{0}^{1}(\Omega)\right)^{2}$ and $z$ as in \eqref{zdelay}.
	Thus, we have found $U=\left(u_{1}, u_{2}, u_{3}, u_{4},z\right) \in D(\mathcal{A})$, which satisfies \eqref{probdisspdege}. Consequently $\lambda I-\mathcal{A}$ is surjective, for $\lambda$ positive, and therefore $\lambda I -(\mathcal{A}-cI)$ is also surjective. Conclusively, the
	Lumer–Phillips Theorem implies that $\mathcal{A}-cI$ generates a strongly
	continuous semigroup of contractions in $\mathcal{H}$. Hence, the operator
	$\mathcal{A}$ generates a strongly continuous semigroup of contraction
	in $\mathcal{H}$. Consequently, the well-posedness result follows from the Hille-Yosida theorem (see \cite[Theorem 4.5.1]{barbu} or  \cite[Theorem A.11]{Coron2007}).
\end{proof}
\section{Stability result}
\label{section3}
In this section, we study the exponential stability of System \eqref{coupledstabilityproblem}.
To this aim we first define its corresponding energy by
\begin{equation}\label{energysystem}
\begin{aligned}
E(t)&:=\frac{1}{2} \left[\int_{\Omega}\left\{u_{t}^{2}(x,t)+|\nabla u|^2(x,t)+y_{t}^{2}(x,t)+|\nabla y|^2(x,t)\right\}\,dx\right]+\frac{\xi\tau^{-1}}{2}\int_{\Omega}a(x)\int_{t-\tau}^{t}u_t^2(x,s)dsdx, \\
& \forall t \geq 0.
\end{aligned}
\end{equation}
In order to prove the exponential stability result, we need to impose a condition correlating the coefficients $\mu_1$ and $\mu_2$ which correspond, respectively, to the non-delayed and delayed dampings. The condition that we shall impose is the following:
\begin{equation}
\label{conditionmu}
\mu_2<\mu_1.
\end{equation}
We point out that this condition aligns with the one obtained in \cite{nicaise2006stability} to assure the exponential stability in the case of a single wave equation. In the opposed case  ($\mu_2\geq\mu_1$),  the exponential stability no longer holds. Moreover, as in the case of a single wave equation, we choose the constant $\xi$ such that the following condition is satisfied 
\begin{equation}
\label{conditionxi}
\tau \mu_2<\xi<\tau\left(2 \mu_1-\mu_2\right).
\end{equation}

Then, under Condition \eqref{conditionmu} and by choosing $\xi$ satisfying Condition \eqref{conditionxi}, the energy defined in \eqref{energysystem}  exhibits a decay with respect to time. More precisely, we have the following result.
\begin{lemma}
	Let $U=(u, u_t, y, y_t)$ be a regular solution to System \eqref{coupledstabilityproblem}. Then, the energy $E$ associated to $(u,y)$ satisfies
	\begin{equation}\label{decergi}
	\begin{aligned}
E^{\prime}(t)&\leq \left(-\mu_1+\frac{\mu_2}{2}+\frac{\xi\tau^{-1}}{2}\right) \int_{\Omega}a(x)u_t^2(x,t)\,dx+\left(\frac{\mu_2}{2}-\frac{\xi\tau^{-1}}{2}\right)\int_{\Omega}a(x)u_t^2(x,t-\tau)\,dx. \\
	&\quad \forall t \geq 0.
	\end{aligned}
	\end{equation}	
	Consequently, if \eqref{conditionmu} holds and $\xi$ satisfies \eqref{conditionxi}, the energy $E(.)$ is nonincreasing.
\end{lemma}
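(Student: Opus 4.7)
The plan is to differentiate $E(t)$ term by term, use the equations in System \eqref{coupledstabilityproblem} to eliminate second time derivatives, and observe that the strong-coupling terms cancel, so that the only surviving contributions come from the two damping terms and from the boundary values of the delay integral. After that, a single application of Young's inequality to the cross term between $u_t(x,t)$ and $u_t(x,t-\tau)$ yields the stated bound.

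More precisely, I would first differentiate the mechanical part of the energy and integrate by parts using the Dirichlet boundary condition, which gives
\begin{equation*}
\frac{d}{dt}\frac{1}{2}\int_\Omega\bigl(u_t^2+|\nabla u|^2+y_t^2+|\nabla y|^2\bigr)\,dx
=\int_\Omega\bigl(u_t(u_{tt}-\Delta u)+y_t(y_{tt}-\Delta y)\bigr)\,dx.
\end{equation*}
Substituting from \eqref{coupledstabilityproblem}, the coupling terms $\int_\Omega b\,y_t u_t\,dx$ and $-\int_\Omega b\,u_t y_t\,dx$ cancel, leaving
\begin{equation*}
-\mu_1\int_\Omega a(x)u_t^2(x,t)\,dx-\mu_2\int_\Omega a(x)u_t(x,t)u_t(x,t-\tau)\,dx.
\end{equation*}

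Next I would differentiate the delay contribution. Since the map $t\mapsto\int_{t-\tau}^{t}u_t^2(x,s)\,ds$ has derivative $u_t^2(x,t)-u_t^2(x,t-\tau)$, this yields
\begin{equation*}
\frac{\xi\tau^{-1}}{2}\int_\Omega a(x)\bigl(u_t^2(x,t)-u_t^2(x,t-\tau)\bigr)\,dx.
\end{equation*}
Applying Young's inequality to the cross term in the form
\begin{equation*}
-\mu_2 a(x)u_t(x,t)u_t(x,t-\tau)\leq \frac{\mu_2}{2}a(x)u_t^2(x,t)+\frac{\mu_2}{2}a(x)u_t^2(x,t-\tau),
\end{equation*}
and adding the three contributions, one obtains exactly the inequality \eqref{decergi}.

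The conclusion is then immediate: under \eqref{conditionmu} the interval $(\tau\mu_2,\tau(2\mu_1-\mu_2))$ is nonempty, and for any $\xi$ in this interval one has $\mu_2/2-\xi\tau^{-1}/2<0$ and $-\mu_1+\mu_2/2+\xi\tau^{-1}/2<0$, so both coefficients in \eqref{decergi} are nonpositive and $E'(t)\le 0$. The only mildly delicate point in the argument is to choose the Young splitting with weight $1/2$ on each side, which is what makes the two coefficients match the constraints on $\xi$; any other weighting would still give dissipation but would not give the sharp interval \eqref{conditionxi} used in the stability theorem of the next section.
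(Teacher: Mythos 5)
Your proof is correct and follows essentially the same route as the paper: differentiate the mechanical and delay parts of $E$, substitute the equations so the strong-coupling terms $\pm\int_\Omega b\,u_t y_t\,dx$ cancel, apply Young's inequality with equal weights to the cross term $-\mu_2\int_\Omega a\,u_t(x,t)u_t(x,t-\tau)\,dx$, and read off the sign conditions from \eqref{conditionmu} and \eqref{conditionxi}. The only cosmetic difference is your concluding aside on the sharpness of the symmetric Young splitting, which the paper does not discuss but which is consistent with its argument.
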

\begin{proof}
	By differentiating the energy given by \eqref{energysystem}, we have 
	$$\begin{aligned}
	E^{\prime}(t)=&\int_{\Omega}\left\{u_t(x,t) u_{tt}(x,t)+\nabla u(x,t)\nabla u_t(x,t) +y_t(x,t)y_{tt}(x,t)+\nabla y(x,t)\nabla y_t(x,t)\right\}\,dx\\
	&+\frac{\xi\tau^{-1}}{2}\int_{\Omega}a(x)u_t^2(x,t)\,dx-\frac{\xi\tau^{-1}}{2}\int_{\Omega}a(x)u_t^2(x,t-\tau)\,dx\\
	=&\int_{\Omega}\left\{u_t(x,t) \left(\Delta u(x,t)-by_t(x,t)-a(x)(\mu_1u_t(x,t)+\mu_2u_t(x,t-\tau))\right)+\nabla u(x,t)\nabla u_t(x,t)\right.\\
	&\left.+y_t(x,t)\left(\Delta y(x,t)+b u_{t}(x,t)+\nabla y(x,t)\nabla y_t(x,t)\right.\right\}\,dx\\
	&+\frac{\xi\tau^{-1}}{2}\int_{\Omega}a(x)u_t^2(x,t)\,dx-\frac{\xi\tau^{-1}}{2}\int_{\Omega}a(x)u_t^2(x,t-\tau)\,dx\\
	=&-\mu_1\int_{\Omega}a(x)u_t^2(x,t)\,dx-\mu_2
	\int_{\Omega}a(x)u_t(x,t)u_t(x,t-\tau)\,dx+\frac{\xi\tau^{-1}}{2}\int_{\Omega}a(x)u_t^2(x,t)\,dx\\
	&-\frac{\xi\tau^{-1}}{2}\int_{\Omega}a(x)u_t^2(x,t-\tau)\,dx\\
	&\leq \left(-\mu_1+\frac{\mu_2}{2}+\frac{\xi\tau^{-1}}{2}\right) \int_{\Omega}a(x)u_t^2(x,t)\,dx+\left(\frac{\mu_2}{2}-\frac{\xi\tau^{-1}}{2}\right)\int_{\Omega}a(x)u_t^2(x,t-\tau)\,dx.
	\end{aligned}$$
	The two conditions \eqref{conditionmu} and \eqref{conditionxi} guarantee that 
	$$-\mu_1+\frac{\mu_2}{2}+\frac{\xi \tau^{-1}}{2} \leq 0, \quad \frac{\mu_2}{2}-\frac{\xi \tau^{-1}}{2} \leq 0.$$
	Then, the energy $E$ is nonincreasing.
\end{proof}


Now that we have established the conditions under which the energy $E$ is nonincreasing, we proceed to address the question of determining the rates at which its decay occurs. To this end, we consider the following system:
	\begin{equation}\label{problemobsg}
\begin{cases}
w_{tt}-\Delta w+b \theta_t=0, & \text{in } \Omega \times (0, \infty), \\ 
\theta_{tt}-\Delta \theta-b w_{t}=0, & \text{in }\Omega \times (0, \infty), \\ 
w=\theta=0, & \text {on }\partial\Omega \times (0, \infty),\\
w(x,0)=w_{0}(x), \quad w_t(x,0)=w_1(x), & \text{in } \Omega, \\ 
\theta(x,0)=\theta_0(x), \quad \theta_t(x,0)=\theta_1(x),  & \text{in } \Omega.
\end{cases}
\end{equation}
with initial data $\left(w_0, w_1,\theta_0,\theta_1\right) \in \left(H_0^1(\Omega) \times L^2(\Omega)\right)^2$. 

Furthermore, we denote by $E_S(.)$ the standard energy corresponding to strongly coupled  wave equations
\begin{equation}
\label{energyobs}
E_S(t):=\frac{1}{2} \int_{\Omega}\left\{w_t^2(x, t)+|\nabla w(x, t)|^2+\theta_t^2(x,t)+|\nabla \theta(x,t)|^2\right\} d x.
\end{equation}

It is well-known (see \cite{Gerbi2021}) that Problem \eqref{problemobsg} is conservative. Moreover, for any function $b,c\in W^{1,\infty}(\Omega)$ and non empty open sets $\omega_{c^{+}}$ and $\omega_{b,0}$  satisfying 
\begin{equation}
\tag{LH1}\label{LH1}
\overline{\omega_{b,0}}\subset\left\{x\in\Omega: \quad b(x)\neq0\right\}
\end{equation}
\begin{equation}
\tag{LH2}\label{LH2}
\overline{\omega_{c^{+}}}\subset\left\{x\in\Omega: \quad c(x)>0\right\},
\end{equation}
and $$\omega_{b,0}\subset\omega_{c^{+}}$$
 satisfying the GCC,
 the following observability inequality holds if the observability time is sufficiently large.
\begin{theorem}
	\label{Theoremobs}
 There exists a time $T_0>0$ such that for any time $T>T_0$ there exists a positive constant $C$ (depending on $T$ ) for which
$$
E_S(0) \leq C \int_0^T \int_\Omega c(x) w_t^2(x, t) d x d t
$$
for any weak solution to Problem \eqref{problemobsg}.
\end{theorem}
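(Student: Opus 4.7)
The plan is to prove the observability inequality by a compactness–uniqueness argument in the spirit of Bardos–Lebeau–Rauch, adapted to the coupled setting as in \cite{Alabau2013,Gerbi2021}. Because only one component, $w_t$, is observed, this is a genuinely indirect observability statement, so the classical single-wave result cannot be applied in one shot.

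First, I would exploit the classical observability for a single wave equation: since $\omega_{c^+}$ satisfies the GCC, there exists $T_0^{\ast}$ such that, for any $T>T_0^{\ast}$, every solution $\varphi$ of $\varphi_{tt}-\Delta\varphi=F$ with $\varphi|_{\partial\Omega}=0$ satisfies
$$
E_\varphi(0)\le C\int_0^T\!\!\int_\Omega c(x)\varphi_t^2\,dx\,dt+C\|F\|_{L^2(\Omega\times(0,T))}^2.
$$
Applying this to the first equation of \eqref{problemobsg} with $F=-b\theta_t$ gives
$$
E_w(0)\le C\int_0^T\!\!\int_\Omega c(x)w_t^2\,dx\,dt+C\int_0^T\!\!\int_\Omega b^2\theta_t^2\,dx\,dt.
$$
Next, to bring the energy of $\theta$ into play, I would use a localized multiplier/cut-off argument based on the hypothesis $\overline{\omega_{b,0}}\subset\{b\neq 0\}\cap\omega_{c^+}$: since $b$ does not vanish on $\omega_{b,0}$, the second equation allows one to rewrite $b\,w_t$ as a coupling term that, after multiplication by a suitable smooth cut-off supported in $\omega_{c^+}$, yields $E_\theta(0)$ up to terms controlled by $\int_0^T\!\int_\Omega c(x)w_t^2\,dx\,dt$ and by lower-order $L^2$ norms of $(w,\theta)$. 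Adding the two estimates produces an inequality of the form
$$
E_S(0)\le C\int_0^T\!\!\int_\Omega c(x)w_t^2\,dx\,dt+C\,\bigl\|(w,\theta)\bigr\|_{L^2((0,T);L^2(\Omega))^2}^2.
$$

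The compact lower-order remainder is then removed by the standard compactness–uniqueness argument: suppose by contradiction that the final observability inequality fails. Then one obtains a sequence $(w^n,\theta^n)$ of solutions, bounded in $C([0,T];H_0^1(\Omega))^2\cap C^1([0,T];L^2(\Omega))^2$ by energy conservation, such that the observation term tends to $0$ while $E_S^n(0)=1$. After extracting a subsequence, Aubin–Lions gives strong $L^2$-convergence of $(w^n,\theta^n)$ towards a limit $(w^\ast,\theta^\ast)$, which still solves \eqref{problemobsg} and satisfies $c(x)w_t^\ast=0$, so that $w_t^\ast\equiv 0$ on $\omega_{c^+}\times(0,T)$.

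The hard part — and the technical crux of the whole argument — is the unique continuation step: one must conclude that this limit vanishes identically, provided $T_0$ is chosen large enough. On $\omega_{c^+}$ the first equation reduces to $\Delta w^\ast=b\theta_t^\ast$ (time-independent, since $w_{tt}^\ast=0$), and restricting to $\omega_{b,0}$ where $b\neq 0$ allows us to express $\theta_t^\ast$ in terms of $w^\ast$; differentiating the system in time and iterating the argument transfers regularity between the two components, and then a Holmgren-type propagation along the generalized bicharacteristics entering $\omega_{c^+}$ forces $(w^\ast,\theta^\ast)\equiv 0$, contradicting $E_S^\ast(0)=1$. I would choose $T_0$ to exceed both the GCC-time of $\omega_{c^+}$ and the time needed for this coupled unique continuation to close; this unique continuation step for the coupled conservative system (rather than for a scalar wave equation) is the principal obstacle and the reason the result is not a direct corollary of Bardos–Lebeau–Rauch.
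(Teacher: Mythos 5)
Your proposal cannot be compared against an internal proof, because the paper does not prove Theorem \ref{Theoremobs} at all: it is imported as a known result (see the sentence preceding its statement, citing \cite{Gerbi2021}, where this indirect observability inequality for the conservative velocity-coupled system is established). Judged on its own merits, your sketch follows the standard compactness--uniqueness template, and its outer shell (perturbed scalar observability for $w$, an intermediate inequality with a compact remainder, Aubin--Lions, contradiction) is sound. But the two load-bearing steps are asserted rather than proved, and they are precisely where the difficulty of \emph{indirect} observability lives. First, the recovery of $E_\theta(0)$: the only pointwise relation available is $b\,\theta_t=\Delta w-w_{tt}$ from the first equation of \eqref{problemobsg}, so on $\omega_{b,0}$ one can write $\theta_t=b^{-1}(\Delta w-w_{tt})$ --- but this \emph{loses a derivative}: controlling $\int_0^T\int_\Omega \theta_t^2$ this way requires $H^2\times H^1$-level information on $w$, which is not dominated by $\int_0^T\int_\Omega c(x)\,w_t^2$ at the energy level. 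Your claim that ``multiplication by a suitable smooth cut-off yields $E_\theta(0)$ up to controlled terms'' skips exactly the chain of space--time integrations by parts (using both equations and the antisymmetry of the coupling, with absorption of the recombined $\int_0^T\int_\Omega \eta\, b^2\theta_t^2$ terms), or alternatively a two-level-energy/weakened-observability scheme in the spirit of \cite{Alabau2013}, that makes this step true. As written, your intermediate inequality is a statement of what must be proved, not a proof.

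Second, the unique continuation step fails as formulated: you invoke ``Holmgren-type propagation,'' but Holmgren's theorem requires analytic coefficients, whereas here $b\in W^{1,\infty}(\Omega)$ only. Because the coefficients are time-independent one could hope for unique continuation of Tataru/Robbiano--Zuily/H\"ormander type (partially analytic coefficients), but those results are formulated for scalar equations; what is needed here is a UCP for the coupled $2\times 2$ system from the vanishing of a \emph{single} component's velocity on $\omega_{c^+}\times(0,T)$, and that is not an off-the-shelf consequence. Your local bootstrap ($w_t^\ast=0$ on $\omega_{c^+}$, hence $\Delta w^\ast=b\,\theta_t^\ast$ time-independent there, hence $\theta_t^\ast=b^{-1}\Delta w^\ast$ on $\omega_{b,0}$, etc.) only produces information \emph{inside} $\omega_{c^+}$; propagating it to all of $\Omega$ is the entire problem, and you explicitly defer it. With both the coupling estimate and the UCP deferred, the proposal is a program rather than a proof --- which is, in effect, why the paper itself does not reprove this statement and instead quotes the observability inequality from \cite{Gerbi2021}.
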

With the aforementioned observability inequality at hand, we can prove the following exponential stability result for the  Problem \eqref{coupledstabilityproblem}.
\begin{theorem}
	\label{Theoremexpstability}
 Assume that Conditions \eqref{conditionb}, \eqref{conditionbdamping}, \eqref{conditionmu} and \eqref{conditionxi} hold and $\omega$ satisfies the GCC. Then, there exist two positive constants $K, \tilde{\mu}$ such that
\begin{equation}
\label{expstability}
E(t) \leq K e^{-\tilde{\mu} t} E(0), \quad t>0,
\end{equation}
for any solution to Problem \eqref{coupledstabilityproblem}.
\end{theorem}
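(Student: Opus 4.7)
The plan is to establish an inequality of the form $E(T)\leq\gamma E(0)$ with $\gamma\in(0,1)$ for some fixed $T$, and then to iterate. I first integrate the energy inequality \eqref{decergi} on $[0,T]$: thanks to \eqref{conditionmu}--\eqref{conditionxi} this yields
\begin{equation*}
E(0)-E(T) \;\geq\; \alpha \int_0^T\!\!\int_\Omega a(x)\,u_t^2(x,t)\,dx\,dt \;+\; \beta \int_0^T\!\!\int_\Omega a(x)\,u_t^2(x,t-\tau)\,dx\,dt,
\end{equation*}
with $\alpha=\mu_1-\mu_2/2-\xi\tau^{-1}/2>0$ and $\beta=\xi\tau^{-1}/2-\mu_2/2>0$. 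For $T\geq\tau$, a change of variable in the second integral also controls the delay-history term $\int_\Omega a\int_{-\tau}^0 f_0^2\,ds\,dx$ by $E(0)-E(T)$.

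Next, I compare with the conservative system \eqref{problemobsg}: let $(w,\theta)$ be its solution with initial data $(u_0,u_1,y_0,y_1)$, so that $E_S$ is conserved. Since $\omega$ satisfies the GCC, $a\geq a_0>0$ on $\omega$ and $\omega_b\subset\omega$, Theorem \ref{Theoremobs} applies with $c=a$ (taking $\omega_{b,0}=\omega_b$ and any $\omega_{c^+}\supset\omega$), producing $E_S(0)\leq C\int_0^T\!\int_\Omega a\,w_t^2\,dx\,dt$ for every $T>T_0$. Writing $(\phi,\psi):=(u-w,y-\theta)$, the pair solves the same conservative coupled system with zero initial data and source $F=-a(\mu_1 u_t+\mu_2 u_t(\cdot,\cdot-\tau))$ in the first equation; because the coupling is skew, the standard energy identity reduces to $E_\phi'(t)=\int_\Omega F\phi_t\,dx$ and a Duhamel argument gives $\sqrt{E_\phi(t)}\leq \tfrac{1}{\sqrt{2}}\int_0^t\|F(s)\|_{L^2(\Omega)}\,ds$. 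Exploiting $|F|^2\leq 2\|a\|_\infty a(\mu_1^2 u_t^2+\mu_2^2 u_t^2(\cdot,\cdot-\tau))$ and Cauchy--Schwarz in time then leads to $\sup_{[0,T]}E_\phi\leq CT(E(0)-E(T))$, and therefore $\int_0^T\!\int_\Omega a\,\phi_t^2\,dx\,dt\leq C_T(E(0)-E(T))$.

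Substituting $w_t=u_t-\phi_t$ into the observability inequality and combining with the dissipation bound from the first step yields $E_S(0)\leq C_T(E(0)-E(T))$. Since $E(0)-E_S(0)=\tfrac{\xi\tau^{-1}}{2}\int_\Omega a\int_{-\tau}^0 f_0^2\,ds\,dx$, the delay-history estimate from the first step also controls this remainder by $E(0)-E(T)$; adding the two bounds gives $E(0)\leq\widetilde{C}_T(E(0)-E(T))$, i.e.\ $E(T)\leq\gamma E(0)$ with $\gamma=1-1/\widetilde{C}_T\in(0,1)$. Iterating on the intervals $[nT,(n+1)T]$ and using that $E$ is non-increasing then delivers \eqref{expstability} with $\widetilde{\mu}=(\ln 1/\gamma)/T$ and $K=1/\gamma$. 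The main obstacle is the middle step: the ``lower-order'' correction term $\int_0^T\!\int a\,\phi_t^2$ has to be absorbed into the dissipation rather than into $E(0)$, which is why the $L^\infty_t$ control of $E_\phi$ by $E(0)-E(T)$ obtained from Duhamel is essential. The second delicate point is the delay-history contribution $\int_\Omega a\int_{-\tau}^0 f_0^2$, which is absorbed precisely because condition \eqref{conditionmu} forces $\beta>0$.
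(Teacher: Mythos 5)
Your proposal follows the paper's own proof almost step for step: the same decomposition $u=w+\tilde w$, $y=\theta+\tilde\theta$ into the conservative system \eqref{problemobsg} (with the original initial data) plus a zero-initial-data correction driven by the source $-a\left(\mu_1 u_t+\mu_2 u_t(\cdot,\cdot-\tau)\right)$; the same use of the observability inequality of Theorem \ref{Theoremobs} on the conservative part; the same absorption of the delay-history term $\int_\Omega a\int_{-\tau}^0 f_0^2\,ds\,dx$ through the change of variables valid for $T\geq\tau$; and the same conclusion $E(T)\leq\gamma E(0)$ with $\gamma<1$, iterated via translation invariance and monotonicity of $E$. The only difference in the correction estimate is cosmetic: you control $\sup_{[0,T]}E_\phi$ by a Duhamel argument, while the paper integrates the energy identity twice and absorbs $\frac14\int_0^T\int_\Omega\tilde w_t^2$ by Young's inequality; both routes produce a constant of order $T^2$ times the dissipation terms $\int_0^T\int_\Omega a\,u_t^2$ and $\int_0^T\int_\Omega a\,u_t^2(\cdot,t-\tau)$, which your first step correctly bounds by $E(0)-E(T)$ using $\alpha,\beta>0$ from \eqref{conditionmu}--\eqref{conditionxi}.

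The one genuine flaw is your invocation of Theorem \ref{Theoremobs} ``with $c=a$ (taking $\omega_{b,0}=\omega_b$ and any $\omega_{c^+}\supset\omega$)''. This does not meet the theorem's hypotheses: the theorem is stated for $c\in W^{1,\infty}(\Omega)$, whereas $a$ is only assumed to be in $L^\infty(\Omega)$; moreover \eqref{LH2} requires $\overline{\omega_{c^{+}}}\subset\{x\in\Omega:\ c(x)>0\}$, which fails for $\omega_{c^+}\supset\omega$ because $a>a_0$ is only assumed a.e.\ \emph{in} $\omega$ --- the set $\{a>0\}$ is defined only up to null sets and need not contain a neighborhood of $\overline{\omega}$, let alone of a strictly larger open set. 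The paper repairs exactly this point: pick $x_0\in\omega_b$ and $\varepsilon>0$ with $B(x_0,2\varepsilon)\subset\omega_b\subset\omega$, set $\omega_{b,0}=\omega_b\cap B\left(x_0,\frac{\varepsilon}{2}\right)$ and $\omega_{c^+}=B\left(x_0,\frac{\varepsilon}{2}\right)$, and take a smooth Urysohn function $0\leq c\leq1$ with $c\equiv1$ on $B(x_0,\varepsilon)$ and $c\equiv0$ outside $B(x_0,2\varepsilon)$, so that \eqref{LH1} and \eqref{LH2} hold; since $c\leq \mathrm{1}_{\omega}\leq a/a_0$ a.e., the conclusion of Theorem \ref{Theoremobs} then upgrades to $E_S(0)\leq C\int_0^T\int_\Omega a\,w_t^2\,dx\,dt$, which is precisely the inequality the rest of your argument needs. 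With that substitution your proof is complete and coincides with the paper's.
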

\begin{proof}
	In the spirit of \cite{enrike1990exponential} and \cite{nicaise2006stability}, we can decompose the solution $(u,y)$ of \eqref{coupledstabilityproblem}  as $u=w+\tilde{w}$ and $y=\theta+\tilde{\theta},$ 
	where $(w,\theta)$ is the solution to the problem 
	\begin{equation}\label{problemobs}
	\begin{cases}
	w_{tt}(x,t)-\Delta w(x,t)+b \theta_t(x,t)=0, & \text{in } \Omega \times (0, \infty), \\ 
	\theta_{tt}(x,t)-\Delta \theta (x,t)-b w_{t}(x,t)=0, & \text{in }\Omega \times (0, \infty), \\ 
	w(x,t)=\theta(x,t)=0, & \text {on }\partial\Omega \times (0, \infty),\\
	w(x,0)=u_{0}(x), \quad w_t(x,0)=u_1(x), & \text{in } \Omega, \\ 
	\theta(x,0)=y_0(x), \quad \theta_t(x,0)=y_1(x),  & \text{in } \Omega,
	\end{cases}
	\end{equation}
	and $(\tilde{w},\tilde{\theta})$ solution to the problem
	\begin{equation}\label{problemdecomp}
	\begin{cases}
\tilde{w}_{tt}(x,t)-\Delta \tilde{w}(x,t)+b \tilde{\theta}_t(x,t)=-a\left[\mu_1u_t(x,t)+\mu_2u_t(x,t-\tau)\right], & \text{in } \Omega \times (0, \infty), \\ 
	\tilde{\theta}_{tt}(x,t)-\Delta \tilde{\theta}(x,t)-b \tilde{w}_{t}(x,t)=0, & \text{in }\Omega \times (0, \infty), \\ 
\tilde{w}(x,t)=	\tilde{\theta}(x,t)=0, & \text {on }\partial\Omega \times (0, \infty),\\
	\tilde{w}(x,0)=0, \quad \tilde{w}_t(x,0)=0, & \text{in } \Omega, \\ 
	\tilde{\theta}(x,0)=0, \quad \tilde{\theta}_t(x,0)=0,  & \text{in } \Omega.
	\end{cases}
	\end{equation}
	By \eqref{energysystem} and \eqref{energyobs}, we obtain that
	$$E(0)=E_{S}(0)+\frac{\xi\tau^{-1}}{2}\int_{\Omega}a(x)\int_{-\tau}^{0} u_t^2(x,s)\,ds\,dx=E_{S}(0)+\frac{\xi\tau^{-1}}{2}\int_{\Omega}a(x)\int_{0}^{\tau} u_t^2(x,t-\tau)\,dxdt.$$
	Now, since  $\omega_{b}$ is non empty, then there exist $x_{0}\in\omega_{b}.$ Moreover, since $\omega_{b}$ is an open subset, then there exists $\varepsilon>0$ such that $B(x_{0},2\varepsilon)\subset \omega_{b}\subset\omega.$ Taking $\omega_{b,0}=\omega_{b}\cap B(x_{0},\frac{\varepsilon}{2}),$ it is clear that $\omega_{b,0}$ satisfies Condition \eqref{LH1}. On the other hand  by Urysohn's lemma, there exists a smooth function $0\leq c\leq 1$ such that 
	\begin{equation}
	c(x)=\begin{cases}
1,&\text{ if } \quad x\in B(x_{0},\varepsilon),\\
0,&\text{ if }\quad x\notin B(x_{0},2\varepsilon).
	\end{cases}
	\end{equation} 
Now, we set $\omega_{c^{+}}=B(x_{0},\frac{\varepsilon}{2}).$ Thereby, we can see that $\overline{\omega_{c^{+}}}\subset B(x_{0},\varepsilon)$ and hence the function $c$ satisfies Condition \eqref{LH2} and $\omega_{b,0}\subset\omega_{c^{+}}.$

Thus, we can apply the Theorem \ref{Theoremobs}, to obtain that for every $T>T_{0}$ 
	$$\begin{aligned}
E_{S}(0)&\leq C\int_{0}^{T}\int_{\Omega}c(x)w_t^2(x,t)\,dx\,dt\leq C\int_{0}^{T}\int_{B(x_{0},2\varepsilon)}w_t^2(x,t)\,dx\,dt\leq C\int_{0}^{T}\int_{\omega}w_t^2(x,t)\,dx\,dt \\
&\leq \frac{C}{a_0}\int_{0}^{T}\int_{\omega}a_0w_t^2(x,t)\,dx\,dt\leq C \int_{0}^{T}\int_{\omega}a(x)w_t^2(x,t)\,dx\,dt\leq  C \int_{0}^{T}\int_{\Omega}a(x)w_t^2(x,t)\,dx\,dt.
	\end{aligned}$$
	Hence, we can deduce that  if $T>\max\{T_{0},\tau\}$, it holds that
	\begin{equation}
	\label{majE0}
	\begin{aligned}
E(0) &\leq  C \int_0^T \int_\Omega a(x)w_t^2(x, t) d x d t
	 +\frac{\xi\tau^{-1}}{2}\int_{\Omega}a(x) \int_0^T  u_t^2(x, t-\tau) d t d x\\
	 &\leq  2C \int_\Omega a(x) \int_0^T  \left(u_{t}^2(x,t)+\tilde{w}_t^2(x, t)\right) d t d x
	 +\frac{\xi\tau^{-1}}{2} \int_{\Omega} a(x)\int_0^T  u_t^2(x, t-\tau) d t d x.
	 	\end{aligned}
	 	\end{equation}
	 	Now, observe that from \eqref{problemdecomp}, it follows that
	 	$$\begin{aligned}
	 	\frac{1}{2} \frac{d}{d t} &  \int_{\Omega}\left\{\tilde{w}_t^2(x,t)+|\nabla \tilde{w}(x,t)|^2+\tilde{\theta}_t^2(x,t)+|\nabla \tilde{\theta}(x,t)|^2\right\} d x\\
	 	&=\int_{\Omega}\left\{\tilde{w}_t(x,t)\tilde{w}_{tt}(x,t)+\nabla \tilde{w}(x,t)\nabla \tilde{w}_t(x,t)+\tilde{\theta}_t(x,t)\tilde{\theta}_{tt}(x,t)+\nabla \tilde{\theta}(x,t)\nabla \tilde{\theta}_t(x,t)\right\} \\
	 	& =\int_{\Omega} \tilde{w}_t\left[-\mu_1 a(x)u_t(x,t)-\mu_2 a(x)u_t(x,t-\tau)\right] d x.
	 	\end{aligned}$$
	 	Integrating with respect to time on $[0, t]$, for $t \in(0, T]$, and using the fact that the initial data in \eqref{problemdecomp} is set to zero, we have
	 	$$
	 	\begin{aligned}
	 	\frac{1}{2} \int_{\Omega}\left\{\tilde{w}_t^2(x,t)+|\nabla \tilde{w}(x,t)|^2+\tilde{\theta}_t^2(x,t)+|\nabla \tilde{\theta}(x,t)|^2\right\} &d x =  -\mu_1 \int_{\Omega}a(x)\int_0^t  \tilde{w}_t(x,t) u_t(x,t) d t d x \\
	 	& -\mu_2\int_{\Omega}a(x)\int_0^t  \tilde{w}_t(x,t) u_t(x,t-\tau) d t d x.
	 	\end{aligned}
	 	$$
	 	
	 	By integrating with respect to time, we deduce that
	 	$$
	 	\begin{aligned}
	 	\int_0^T \int_{\Omega} \tilde{w}_t^2(x,t) d x d t \leq& -2 \mu_1 T \int_{\Omega}a(x)\int_0^t  \tilde{w}_t(x,t) u_t(x,t) d x d t\\
	 	 &-2 \mu_2 T\int_{\Omega}a(x)\int_0^t  \tilde{w}_t(x,t) u_t(x,t-\tau) d x d t\\
	 	 \leq &\frac{1}{4}\int_{\Omega}\int_{0}^{T} \tilde{w}_t^2(x,t) d x d t+4T^2\mu_1^2\|a\|_{L^{\infty}(\Omega)}\int_{\Omega}a(x)\int_{0}^{T} u_t^2(x,t) d x d t\\
	 	 &+ \frac{1}{4}\int_{\Omega}\int_{0}^{T} \tilde{w}_t^2(x,t) d x d t+4T^2\mu_2^2\|a\|_{L^{\infty}(\Omega)}\int_{\Omega}a(x)\int_{0}^{T} u_t^2(x,t-\tau) d x d t,
	 	\end{aligned}
	 	$$
	 	which implies that
	 	$$
	 	\begin{aligned}
	 	\int_0^T \int_{\Omega} \tilde{w}_t^2(x,t) d x d t \leq& 8T^2\mu_1^2\|a\|_{L^{\infty}(\Omega)}\int_{\Omega}a(x)\int_{0}^{T} u_t^2(x,t) d x d t\\
	 	&+8T^2\mu_2^2\|a\|_{L^{\infty}(\Omega)}\int_{\Omega}a(x)\int_{0}^{T} u_t^2(x,t-\tau) d x d t.
	 	\end{aligned}
	 	$$ 
	 	We  can conclude that 
	 		$$
	 	\begin{aligned}
	  \int_{\Omega}a(x)\int_0^T \tilde{w}_t^2(x,t) d t d x \leq&	\|a\|_{L^{\infty}(\Omega)}\int_0^T \int_{\Omega} \tilde{w}_t^2(x,t) d x d t\\ \leq& 8T^2\mu_1^2\|a\|_{L^{\infty}(\Omega)}^2\int_{\Omega}a(x)\int_{0}^{T} u_t^2(x,t) d x d t\\
	 	&+8T^2\mu_2^2\|a\|_{L^{\infty}(\Omega)}^2\int_{\Omega}a(x)\int_{0}^{T} u_t^2(x,t-\tau) d x d t.
	 	\end{aligned}
	 	$$ 
	 	This combined with \eqref{majE0} yield that
	 	$$E(0)\leq C_0 \int_{\Omega}a(x)\int_{0}^{T}\left(u_t^2(x,t)+u_t^2(x,t-\tau)\right)\,dt \,dx,$$
	 	where $C_0$ is a positive constant.

	 From \eqref{decergi}, we have
	 	$$
	 	E(T)-E(0) \leq-C \int_{\Omega}a(x) \int_0^T\left\{u_t^2(x, t)+u_t^2(x, t-\tau)\right\} d t dx.
	 	$$
	 	
	 	Hence, we obtain
	 	$$
	 	E(T) \leq E(0) \leq C_0 \int_{\Omega}a(x)\int_{0}^{T}\left(u_t^2(x,t)+u_t^2(x,t-\tau)\right)\,dt \,dx \leq C_0 C^{-1}(E(0)-E(T)),
	 	$$
	 	then
	 	$$
	 	E(T) \leq \tilde{C} E(0)
	 	$$
	 	with $\tilde{C}<1$. 
	 	
	 	This easily implies the stability estimate \eqref{expstability}, since our System \eqref{coupledstabilityproblem} is invariant by translation and the energy $E$ is decreasing.
\end{proof}

\section{Numerical analysis in the case of one dimensional spatial domains}
\label{section4}
When it comes to the numerical validation of the theoretical results pertaining to the stabilization of partial differential equations, one needs to first make sure that the numerical solution, generated by the chosen approximating numerical method, preservers most of the physical properties of its continuous counterpart. In our case, the most important physical property that we shall focus on pertains to the inequality given by \eqref{decergi}, which asserts that, for any given initial state $U_{0}=\left(u_{0}, u_{1}, y_{0}, y_{1},f_0\right) \in \mathcal{H}$, if \eqref{conditionmu} holds and $\xi$ satisfies \eqref{conditionxi}, then the continuous energy of System \eqref{coupledstabilityproblem} is non-increasing.


To proceed further, we consider a one dimensional spatial domain given by $\Omega=(0,1)$. Needless to say, in this case, $\Omega$ is of class $C^2$ and System \eqref{coupledstabilityproblem} reduces to the following one: 

\begin{equation}\label{problem1d}
	\begin{cases}
		u_{tt}(x,t)-u_{xx}(x,t)+b(x) y_{t}(x,t)+a(x)\left[\mu_1 u_t(x,t)+\mu_2 u_t(x,t-\tau)\right]=0, & \text{in } (0,1) \times (0, \infty), \\ 
		y_{tt}(x,t)-y_{xx}(x,t)-b(x) u_{t}(x,t)=0, & \text{in }(0,1) \times (0, \infty), \\ 
		u(0,t)=y(0,t)=0, & \text {on } (0, \infty),\\
		u(1,t)=y(1,t)=0, & \text {on } (0, \infty),\\
		u(x,0)=u_{0}(x), \quad u_t(x,0)=u_1(x), & \text{in } (0,1), \\ 
		y(x,0)=y_{0}(x), \quad y_t(x,0)=y_1(x),  & \text{in } (0,1),\\
		u_t(x,t)=f_0(x,t), &  \text{in } (0,1) \times (-\tau,0).
	\end{cases}
\end{equation}
On the other hand, the energy given by \eqref{energysystem} becomes
\begin{equation}\label{energysystem1d}
	\begin{aligned}
		E(t)&:=\frac{1}{2} \left[\int_{0}^1 \left\{u_{t}^{2}(x,t)+u_x^2(x,t)+y_{t}^{2}(x,t)+| y_x^2(x,t)\right\}\,dx\right]+\frac{\xi\tau^{-1}}{2}\int_{0}^1 a(x)\int_{t-\tau}^{t}u_t^2(x,s)dsdx, \\
		& \forall t \geq 0.
	\end{aligned}
\end{equation}


 The numerical analysis of the stabilization problem of System \eqref{problem1d} features the following two main parts: 
 \begin{enumerate}
 	\item \textbf{Numerical approximation of  the continuous system:}  In this step, we aim to develop by means of the Finite Difference Method a numerical scheme of System \eqref{problem1d} which generates a stable converging discrete solution. In this regard, we point out that  based on the approximations of the operators $\Psi \mapsto \Psi_{tt}$, $\Psi \mapsto \Psi_{xx}$ and $\Psi \mapsto \Psi_x$, various numerical schemes can be constructed. In our case, we choose a three-point approximation of the operator $\Psi \mapsto \Psi_{xx}$ at the current time step,  a three-point approximation of the operator $\Psi \mapsto \Psi_{tt}$ and lastly, a centered difference approximation of the operator $\Psi \mapsto \Psi_x$ at the next time step. Let us point out that such choices guarantee that the discrete solution is stable and converging to the continuous smooth solution corresponding to System \eqref{problem1d}. This can be achieved by classical arguments based on the standard Von Neumann stability analysis for coupled partial differential equations.
 	\item \textbf{Numerical approximation of the continuous energy:} Contrarily to the previous part, the numerical approximation of the integrals featuring in the definition of the energy must be carefully selected in order to assure that the generated discrete energy preserves the decay property given by \eqref{decergi}. In our case, we begin by approximating the operator $\Psi \mapsto \Psi_t^2$ showing in the non-delayed integrands of the energy by a forward approximation. Then, the integral with respect to the spatial variable is approximated by a first-order forward approximation. On the other hand, we use a modified approximation of the operator $\Psi \mapsto \Psi_x^2$ featuring both current and next time steps and then use a first-order forward approximation to approximate the  integral with respect to the spatial variable. As for the last term featuring time delay, we  use  a second-ordered centered difference approximation of the operator $\Psi \mapsto \Psi_t^2$, then, as always, we use  a forward approximation to approximate  the  integral with respect to time and space.  As we will explore in the upcoming subsections, the justification of the aforementioned choices of approximations is purely computational in order to assure that the decaying property of the discrete energy is satisfied.
 \end{enumerate}


\subsection{Numerical approximation of the continuous system}
Let $N$ be a non negative integer. Consider the subdivision of $[0,1]$ given by
$$
0=x_0<x_1<\ldots<x_N<x_{N+1}=1, \quad \text { i.e. } x_j=j \Delta x, j=0, \ldots, N+1 .
$$

Set $t^{n+1}-t^n=\Delta t$ for all $n \in \mathbb{N}$.  For $j=0, \ldots, N+1$, we denote $b_j=b\left(x_j\right), a_j=a\left(x_j\right),$ $u_j^{n}=u(x_j,t^{n})$ and $y_j^{n}=y(x_j,t^{n})$.
Let $\tau=I \Delta t,$ then $t^{n}-\tau=(n-I)\Delta t,$ then we can see that for $n\leq I $ the delayed  term in \eqref{problem1d} is defined by the initial function $f_0.$   The explicit finite-difference discretization of System \eqref{problem1d} is thus, for $n =0,1,\ldots,I$ and $j=1, \ldots, N$:
\begin{equation}
\label{prbdescrr}
\begin{cases}\frac{u_j^{n+1}-2 u_j^n+u_j^{n-1}}{\Delta t^2}- \frac{u_{j+1}^n-2 u_j^n+u_{j-1}^n}{\Delta x^2}+b_j \frac{y_j^{n+1}-y_j^{n-1}}{2 \Delta t}+a_j\mu_1 \frac{u_j^{n+1}-u_j^{n-1}}{2 \Delta t}+a_j\mu_2f_0(x_j,(n-I)\Delta t)=0, \\ \frac{y_j^{n+1}-2 y_j^n+y_j^{n-1}}{\Delta t^2}-\frac{y_{j+1}^n-2 y_j^n+y_{j-1}^n}{\Delta x^2}-b_j \frac{u_j^{n+1}-u_j^{n-1}}{2 \Delta t}=0, \\ u_0^n=u_{N+1}^n=0, \\ y_0^n=y_{N+1}^n=0. & \end{cases}
\end{equation} 
For $n>I,$ the explicit finite-difference discretization of System \eqref{problem1d} reads
\begin{equation}
\label{prbdescr}
\begin{cases}\frac{u_j^{n+1}-2 u_j^n+u_j^{n-1}}{\Delta t^2}- \frac{u_{j+1}^n-2 u_j^n+u_{j-1}^n}{\Delta x^2}+b_j \frac{y_j^{n+1}-y_j^{n-1}}{2 \Delta t}+a_j\mu_1 \frac{u_j^{n+1}-u_j^{n-1}}{2 \Delta t}+a_j\mu_2\frac{u_j^{n-I+1}-u_j^{n-I-1}}{2 \Delta t}=0, \\ \frac{y_j^{n+1}-2 y_j^n+y_j^{n-1}}{\Delta t^2}-\frac{y_{j+1}^n-2 y_j^n+y_{j-1}^n}{\Delta x^2}-b_j \frac{u_j^{n+1}-u_j^{n-1}}{2 \Delta t}=0, \\ u_0^n=u_{N+1}^n=0, \\ y_0^n=y_{N+1}^n=0. & \end{cases}
\end{equation} 
According to the initial conditions, for $j=1, \ldots, N$ 
$$u_j^0=u_0(x_j)\quad \text{ and }\quad y_j^0=y_0(x_j).$$
As in \cite{gerbi2021numerical} we can use the second initial conditions  to find the values of $u$ and $y$ at time $t^1 = \Delta t$ by employing a "ghost" time-boundary (i.e. $t^{-1}=-\Delta t$ ) and the second-order centered difference formula for $j=1, \ldots, N$ :
$$
u_1\left(x_j\right)=\left.\frac{\partial u}{\partial t}\right|_{x_j, 0}=\frac{u_j^1-u_j^{-1}}{2 \Delta t}+O\left(\Delta t^2\right) .
$$

Thus we have for $j=1, \ldots, N$:
$$
u_j^{-1}=u_j^1-2 \Delta t u_1\left(x_j\right).
$$

We use the same discrete form of the initial conditions for $y$ to obtain, for $j=1, \ldots, N$, that:
$$
y_j^{-1}=y_j^1-2 \Delta t y_1\left(x_j\right).
$$

Setting $n=0$, in the numerical scheme \eqref{prbdescrr}, the two preceding equalities permit us to compute $\left(u_j^1, y_j^1\right)_{j=0, N}$ as follows  
$$u_j^1=(1-\lambda)u_j^0+\Delta t u_1(x_j)+\frac{\lambda}{2}(u_{j+1}^0+u_{j-1}^0)-b_j \frac{\Delta t^2}{2}y_1(x_j)-a_j\frac{\mu_1\Delta t^2}{2}u_1(x_j)-a_j\frac{\mu_2\Delta t^2}{2}f_0(x_j,-I\Delta t)$$
and 
$$y_j^1=(1-\lambda)y_j^0+\Delta t y_1(x_j)+\frac{\lambda}{2}(y_{j+1}^0+y_{j-1}^0)+b_j \frac{\Delta t^2}{2}u_1(x_j),$$
where $\lambda=\frac{\Delta t^2}{\Delta x^2}.$

Finally, the solution $(u, y)$ can be computed at any time $t^n$. Indeed, using the same argument as in \cite{gerbi2021numerical}  we have,
for $n=1,\ldots, I$ 
$$\begin{aligned}
	&\begin{aligned}
		u_j^{n+1}=(1- \lambda) \alpha_j u_j^n & +\lambda \beta_j\left(u_{j+1}^n+u_{j-1}^n\right)+\gamma_j u_j^{n-1}-(1-\lambda) \varrho_j y_j^n \\
		& -\lambda \zeta_j\left(y_{j+1}^n+y_{j-1}^n\right)+\kappa_j y_j^{n-1}-r_j  f_0(x_j,(n-I)\Delta t)
	\end{aligned}\\
	&\begin{aligned}
		y_j^{n+1}=(1-\lambda) \widetilde{\alpha}_j y_j^n & +\lambda \widetilde{\beta}_j\left(y_{j+1}^n+y_{j-1}^n\right)+\widetilde{\gamma}_j y_j^{n-1}+(1-\lambda) \widetilde{\varrho}_j u_j^n \\
		& +\lambda \tilde{\xi}_j\left(u_{j+1}^n+u_{j-1}^n\right)+\widetilde{\kappa}_j u_j^{n-1}-\widetilde{r}_j f_0(x_j,(n-I)\Delta t)
	\end{aligned}
\end{aligned}$$ 
and for $n>I$ 
 $$\begin{aligned}
 &\begin{aligned}
 u_j^{n+1}=(1- \lambda) \alpha_j u_j^n & +\lambda \beta_j\left(u_{j+1}^n+u_{j-1}^n\right)+\gamma_j u_j^{n-1}-(1-\lambda) \varrho_j y_j^n \\
 & -\lambda \zeta_j\left(y_{j+1}^n+y_{j-1}^n\right)+\kappa_j y_j^{n-1}-r_j  \left(\frac{u_j^{n-I+1}-u_j^{n-I-1}}{2\Delta t}\right)
 \end{aligned}\\
 &\begin{aligned}
 y_j^{n+1}=(1-\lambda) \widetilde{\alpha}_j y_j^n & +\lambda \widetilde{\beta}_j\left(y_{j+1}^n+y_{j-1}^n\right)+\widetilde{\gamma}_j y_j^{n-1}+(1-\lambda) \widetilde{\varrho}_j u_j^n \\
 & +\lambda \tilde{\zeta}_j\left(u_{j+1}^n+u_{j-1}^n\right)+\widetilde{\kappa}_j u_j^{n-1}-\widetilde{r}_j \left(\frac{u_j^{n-I+1}-u_j^{n-I-1}}{2\Delta t}\right)
 \end{aligned}
 \end{aligned}$$ 
 where we have set:
 $$\begin{aligned}
 \alpha_j  =\frac{2}{c_j}, && \beta_j  =\frac{1}{c_j}, 
 &&\gamma_j  =\frac{c_j-2}{c_j}, && \varrho_j  =\frac{b_j \Delta t}{c_j}, &&
 \zeta_j  =\frac{b_j \Delta t}{2c_j},\\ 
 \kappa_j  =\frac{\mu_1a_j}{c_j},&& r_j=\frac{\mu_2a_j \Delta t^2}{c_j},\\
 \widetilde{\alpha}_j  =2-\frac{(b_j \Delta t)^2}{2c_j}, && \widetilde{\beta}_j  =1-\frac{(b_j \Delta t)^2}{4c_j}, 
 &&\widetilde{\gamma}_j  =\frac{(b_j \Delta t)^2}{2c_j}-1, && \widetilde{\varrho}_j  =\frac{b_j \Delta t}{c_j}, &&
 \widetilde{\zeta}_j  =\frac{b_j \Delta t}{2c_j},\\
  \widetilde{\kappa}_j  =\frac{b_j\Delta t}{2}(\frac{c_j-2}{c_j}-1),&&\widetilde{r}_j= \frac{\mu_2a_jb_j \Delta t^3}{2c_j},
 \end{aligned}$$
 where $c_j=1+\frac{\mu_1a_j}{2} \Delta t+\left(\frac{b_j \Delta t}{2}\right)^2.$
 \subsection{Numerical approximation of the continuous energy}
We begin by discretizing  the four leading terms of the energy \eqref{energysystem} by employing the first procedure used in \cite{gerbi2021numerical} to obtain:
 \begin{itemize}
 	\item The discrete kinetic   energy for $u$ as: $E_{k, u}^n=\frac{1}{2} \sum_{j=1}^N\left(\frac{u_j^{n+1}-u_j^n}{\Delta t}\right)^2.$
 	\item  The discrete potential energy for $u$ as: $E_{p, u}^n=\frac{1}{2} \sum_{j=0}^N\left(\frac{u_{j+1}^n-u_j^n}{\Delta x}\right)\left(\frac{u_{j+1}^{n+1}-u_j^{n+1}}{\Delta x}\right).$
 	\item  The discrete kinetic energy for $y$ as: $E_{k, y}^n=\frac{1}{2} \sum_{j=1}^N\left(\frac{y_j^{n+1}-y_j^n}{\Delta t}\right)^2.$
 	\item The discrete potential energy for $u$ as: $E_{y, u}^n=\frac{1}{2} \sum_{j=0}^N\left(\frac{y_{j+1}^n-y_j^n}{\Delta x}\right)\left(\frac{y_{j+1}^{n+1}-y_j^{n+1}}{\Delta x}\right).$
 \end{itemize}
 We approximate the last term in the definition of the energy \eqref{energysystem} as follows:
 $$E_{d,u}^n=\begin{cases}
 \frac{\xi\tau^{-1}}{2}\sum_{j=1}^Na_j\left(\sum_{k=n+1}^{I}\Delta t f_0^2(x_j,(k-I)\Delta t)+\sum_{k=1}^{n}\Delta t\left(\frac{u_j^{k+1}-u_j^{k-1}}{2\Delta t}\right)^2\right),& \text{ if } n<I,\\
 \frac{\xi\tau^{-1}}{2}\sum_{j=1}^Na_j\Delta t\sum_{k=n-I+1}^{n}\left(\frac{u_j^{k+1}-u_j^{k-1}}{2\Delta t}\right)^2,& \text{ if } n\geq I.
 \end{cases}
 $$
 The total discrete energy is then defined as
 $$E^{n}=E_{k,u}^{n}+E_{p,u}^{n}+E_{k,y}^{n}+E_{p,y}^{n}+E_{d,u}^{n}.$$
 
 Let us prove now that this definition of the energy fulfills the  dissipaing property stated above. To this end, for $n\leq I,$
 we multiply the first equation of \eqref{prbdescrr} by $\left(u_j^{n+1}-u_j^{n-1}\right)$ and we sum up over $j=1, \ldots, N$.
 We obtain:
 $$
 \begin{aligned}
 & \sum_{j=1}^N \frac{u_j^{n+1}-2 u_j^n+u_j^{n-1}}{\Delta t^2}\left(u_j^{n+1}-u_j^{n-1}\right)- \sum_{j=1}^N \frac{u_{j+1}^n-2 u_j^n+u_{j-1}^n}{\Delta x^2}\left(u_j^{n+1}-u_j^{n-1}\right) \\
 & +\sum_{j=1}^N b_j \frac{y_j^{n+1}-y_j^{n-1}}{2 \Delta t}\left(u_j^{n+1}-u_j^{n-1}\right)+\mu_1\sum_{j=1}^N a_j \frac{\left(u_j^{n+1}-u_j^{n-1}\right)^2}{2 \Delta t}\\
 &+\mu_2\sum_{j=1}^N a_j f_0(x_j,(n-I)\Delta t)\left(u_j^{n+1}-u_j^{n-1}\right)=0.
 \end{aligned}
 $$
 For the three first leading terms, we reason by the same technique used in \cite{gerbi2021numerical} to establish that
 \begin{equation}
 \label{dissipenergyu}
 \begin{aligned}
&2\left(E_{k,u}^{n}+E_{p,u}^{n}-E_{k,u}^{n-1}-E_{p,u}^{n-1}\right)+2\Delta t \mu_1 \sum_{j=1}^N a_j \left(\frac{u_j^{n+1}-u_j^{n-1}}{2 \Delta t}\right)^2\\
&+2\Delta t \mu_2 \sum_{j=1}^N a_j f_0(x_j,(n-I)\Delta t)\left(\frac{u_j^{n+1}-u_j^{n-1}}{2 \Delta t}\right)+\sum_{j=1}^N b_j \frac{y_j^{n+1}-y_j^{n-1}}{2 \Delta t}\left(u_j^{n+1}-u_j^{n-1}\right)=0.
 \end{aligned}
 \end{equation}
 Similarly, by multiplying the second equation of \eqref{prbdescrr} by $\left(y_j^{n+1}-y_j^{n-1}\right)$, and using the same
 algebraic tricks, we arrive at 
\begin{equation}
\label{dissipenergyy}
 2\left(E_{k, y}^n+E_{p, y}^n-E_{k, y}^{n-1}-E_{p, y}^{n-1}\right)-\sum_{j=1}^N b_j \frac{u_j^{n+1}-u_j^{n-1}}{2 \Delta t}\left(y_j^{n+1}-y_j^{n-1}\right)=0.
\end{equation}
Now, we devote our interest to the last term of the energy \eqref{energysystem}. For $n<I$, we obtain 
$$
\begin{aligned}
E^{n}_{d,u}-E^{n-1}_{d,u}=& \frac{\xi\tau^{-1}}{2}\sum_{j=1}^Na_j\left(\sum_{k=n+1}^{I}\Delta t f_0^2(x_j,(k-I)\Delta t)+\sum_{k=1}^{n}\Delta t\left(\frac{u_j^{k+1}-u_j^{k-1}}{2\Delta t}\right)^2\right)\\
&-\frac{\xi\tau^{-1}}{2}\sum_{j=1}^Na_j\left(\sum_{k=n}^{I}\Delta t f_0^2(x_j,(k-I)\Delta t)+\sum_{k=1}^{n-1}\Delta t\left(\frac{u_j^{k+1}-u_j^{k-1}}{2\Delta t}\right)^2\right)\\
=&\frac{\xi\tau^{-1}}{2}\sum_{j=1}^Na_j\Delta t \left(\frac{u_j^{n+1}-u_j^{n-1}}{2\Delta t}\right)^2-\frac{\xi\tau^{-1}}{2}\sum_{j=1}^Na_j\Delta t f_0^2(x_j,(n-I)\Delta t).
\end{aligned}
$$
For $n=I,$ we have 
$$\begin{aligned}
E^{I}_{d,u}-E^{I-1}_{d,u}=&  \frac{\xi\tau^{-1}}{2}\sum_{j=1}^Na_j\Delta t\sum_{k=1}^{I}\left(\frac{u_j^{k+1}-u_j^{k-1}}{2\Delta t}\right)^2\\
&-\frac{\xi\tau^{-1}}{2}\sum_{j=1}^Na_j\left(\sum_{k=I}^{I}\Delta t f_0^2(x_j,(k-I)\Delta t)+\sum_{k=1}^{I-1}\Delta t\left(\frac{u_j^{k+1}-u_j^{k-1}}{2\Delta t}\right)^2\right)\\
=&\frac{\xi\tau^{-1}}{2}\sum_{j=1}^Na_j\Delta t \left(\frac{u_j^{I+1}-u_j^{I-1}}{2\Delta t}\right)^2-\frac{\xi\tau^{-1}}{2}\sum_{j=1}^Na_j\Delta t f_0^2(x_j,0).
\end{aligned}$$ 
Then, for $n\leq I$, we have   
\begin{equation}
\label{dissipenergydelay}
\begin{aligned}
E^{n}_{d,u}-E^{n-1}_{d,u}=\frac{\xi\tau^{-1}}{2}\sum_{j=1}^Na_j\Delta t \left(\frac{u_j^{n+1}-u_j^{n-1}}{2\Delta t}\right)^2-\frac{\xi\tau^{-1}}{2}\sum_{j=1}^Na_j\Delta t f_0^2(x_j,(n-I)\Delta t).
\end{aligned}
\end{equation}
Now, by combining \eqref{dissipenergyu}, \eqref{dissipenergyy} and \eqref{dissipenergydelay}, it holds that  
$$\begin{aligned}
E^{n}-E^{n-1}=& -\Delta t \mu_1 \sum_{j=1}^N a_j \left(\frac{u_j^{n+1}-u_j^{n-1}}{2 \Delta t}\right)^2-\Delta t \mu_2 \sum_{j=1}^N a_j f_0(x_j,(n-I)\Delta t)\left(\frac{u_j^{n+1}-u_j^{n-1}}{2 \Delta t}\right)\\
&+\frac{\xi\tau^{-1}}{2}\sum_{j=1}^Na_j\Delta t \left(\frac{u_j^{n+1}-u_j^{n-1}}{2\Delta t}\right)^2-\frac{\xi\tau^{-1}}{2}\sum_{j=1}^Na_j\Delta t f_0^2(x_j,(n-I)\Delta t)\\
\leq& \Delta t \left(-\mu_1+\frac{\mu_2}{2}+\frac{\xi\tau^{-1}}{2}\right) \sum_{j=1}^N a_j \left(\frac{u_j^{n+1}-u_j^{n-1}}{2 \Delta t}\right)^2\\
&+\Delta t \left(\frac{\mu_2}{2}-\frac{\xi\tau^{-1}}{2}\right)\sum_{j=1}^Na_j f_0^2(x_j,(n-I)\Delta t).
\end{aligned}$$
Under the two conditions \eqref{conditionmu} and \eqref{conditionxi}, the dissipating property of the energy \eqref{energysystem}, in the case $n\leq I$, holds.

 We consider now the case where $n>I$. We multiply the first equation of \eqref{prbdescr} by $\left(u_j^{n+1}-u_j^{n-1}\right)$, sum up over $j=1, \ldots, N$ and multiply the second equation of \eqref{prbdescr} by $\left(y_j^{n+1}-y_j^{n-1}\right)$, and sum up over $j=1, \ldots, N$. Consequently, similarly to the case where $n\leq I$, we obtain that 
 \begin{equation}
 \label{dissipenergyuy}
 \begin{aligned}
  &2\left(E_{k, u}^n+E_{p, u}^n+E_{k, y}^n+E_{p, y}^n-E_{k, u}^{n-1}-E_{p, u}^{n-1}-E_{k, y}^{n-1}-E_{k, y}^{n-1}\right)=-2\Delta t \mu_1 \sum_{j=1}^N a_j \left(\frac{u_j^{n+1}-u_j^{n-1}}{2 \Delta t}\right)^2\\
  &-2\Delta t \mu_2 \sum_{j=1}^N a_j \left(\frac{u_j^{n-I+1}-u_j^{n-I-1}}{2 \Delta t}\right)\left(\frac{u_j^{n+1}-u_j^{n-1}}{2 \Delta t}\right)
   \end{aligned}
 \end{equation}
For the last term corresponding to the energy, we obtain
\begin{equation}
\label{dissipenergydelay2}
\begin{aligned}
E^{n}_{d,u}-E^{n-1}_{d,u}=& \frac{\xi\tau^{-1}}{2}\sum_{j=1}^Na_j\Delta t\sum_{k=n-I+1}^{n}\left(\frac{u_j^{k+1}-u_j^{k-1}}{2\Delta t}\right)^2-\frac{\xi\tau^{-1}}{2}\sum_{j=1}^Na_j\Delta t\sum_{k=n-I}^{n-1}\left(\frac{u_j^{k+1}-u_j^{k-1}}{2\Delta t}\right)^2\\
=&\frac{\xi\tau^{-1}}{2}\sum_{j=1}^Na_j\Delta t \left(\frac{u_j^{n+1}-u_j^{n-1}}{2\Delta t}\right)^2-\frac{\xi\tau^{-1}}{2}\sum_{j=1}^Na_j\Delta t \left(\frac{u_j^{n-I+1}-u_j^{n-I-1}}{2\Delta t}\right)^2.
\end{aligned}
\end{equation}
Now, combining \eqref{dissipenergyuy} and \eqref{dissipenergydelay2}, it follows that 
$$\begin{aligned}
E^{n}-E^{n-1}=& -\Delta t \mu_1 \sum_{j=1}^N a_j \left(\frac{u_j^{n+1}-u_j^{n-1}}{2 \Delta t}\right)^2-\Delta t \mu_2 \sum_{j=1}^N a_j \left(\frac{u_j^{n-I+1}-u_j^{n-I-1}}{2\Delta t}\right)\left(\frac{u_j^{n+1}-u_j^{n-1}}{2 \Delta t}\right)\\
&+\frac{\xi\tau^{-1}}{2}\sum_{j=1}^Na_j\Delta t \left(\frac{u_j^{n+1}-u_j^{n-1}}{2\Delta t}\right)^2-\frac{\xi\tau^{-1}}{2}\sum_{j=1}^Na_j\Delta t \left(\frac{u_j^{n-I+1}-u_j^{n-I-1}}{2\Delta t}\right)^2\\
\leq& \Delta t \left(-\mu_1+\frac{\mu_2}{2}+\frac{\xi\tau^{-1}}{2}\right) \sum_{j=1}^N a_j \left(\frac{u_j^{n+1}-u_j^{n-1}}{2 \Delta t}\right)^2\\
&+\Delta t \left(\frac{\mu_2}{2}-\frac{\xi\tau^{-1}}{2}\right)\sum_{j=1}^Na_j \left(\frac{u_j^{n-I+1}-u_j^{n-I-1}}{2\Delta t}\right)^2.
\end{aligned}$$
Thus, under the two conditions \eqref{conditionmu} and \eqref{conditionxi}, we have the dissipating property of the energy \eqref{energysystem} holds in the case where $n> I.$
\section{Validation of the theoretical results by numerical simulations}
\label{section5}
Based on the results of the preceding section, we conduct various numerical simulations in the aim of establishing the theoretical results. To this end, throughout this section, we set 
$$
u_0(x)=x(x-1),\quad u_1(x)=x(x-1),\quad y_0(x)=-x(x-1),\quad y_1(x)=-x(x-1),
$$
and 
$$f_0(x,t)=x(x-1)e^{-t}.$$

The mesh size is chosen as $N=10$ so that $\Delta x=0.1.$

In \cite{gerbi2021numerical} the authors described in detail the effect of Condition \eqref{conditionbdamping} in the case of strongly coupled wave equations without time delay. That being said, we devote our interest to illustrating the effect of the delayed term. To this end, we set
$$b=\mathrm{1}_{[0.1,0.2]}(x) \text{ and } a=\mathrm{1}_{[0.1,0.2] \cup[0.8,0.9]}(x).$$

We also choose $\tau=2$ and discretize the interval $[0,\tau]$ by taking a step size  $\Delta t=0.01$ into  $I=200$  sub-intervals. On the other hand, the final time $T$ is chosen as $T = 500$  and $\xi=\mu_1\tau$ so that Condition \eqref{conditionxi} is satisfied.

We recall that in \cite{nicaise2006stability}, the author proved that the single wave equation with time delay \eqref{singleproblem} is exponentially stable if and only if Condition \eqref{conditionmu} is satisfied. In order to validate the different theoretical results, we will treat two cases. The first case is when Condition \eqref{conditionmu} is satisfied. In this case, we illustrate the influence of the value of $\mu_2$ corresponding to the delayed damping term. In the second case, we will illustrate that unstabillity occurs when $\mu_2\geq\mu_1.$

The interpretation of the obtained numerical results is distinguished according to the following cases:
\begin{enumerate}
	\item \textbf{The case $\mu_2< \mu_1.$ Exponential stability.} 
In the case where $\mu_2< \mu_1$, Fig. \ref{fig: solution in the stable case} illustrates the dynamics of $u$ and $y$ versus time $t$ for large time, where we have chosen $\mu_1 =1$  and $\mu_2=0.5$. This choice verifies the assumption $\mu_2< \mu_1$. It can be observed that  the dynamics exponentially converge to the equilibrium point. Figure \ref{fig: energy in the stable case} shows the evolution of the energy $E$, as defined by \eqref{energysystem}, with respect to time $t$ for different values of $\mu_2$. It can be seen that the energy converges exponentially faster as the values of $\mu_2$ decrease from $\mu_2=0.75$ to $\mu_2 =0$. We notice that the case $\mu_2=0$ is the case when our problem is without delay. In this case, the energy exhibits the fastest exponential decay rate. These results are in line with the theoretical findings established in Theorem \ref{Theoremexpstability}.

 \item  \textbf{The case $\mu_2\geq \mu_1.$ Unstability results.} In the case where $\mu_2\geq \mu_1$,  Fig. \ref{fig: solution in the unstable case}
 presents the dynamics of $y$ and $u$ and Fig. \ref{fig: energy in the unstable case} illustrates the evolution of the energy $E$ with respect to time $t$ when $\mu_1=1$ and $\mu_2=1.2$. These figures indicate that the dynamics diverge. Therefore, we infer that the presence of a time delay $\tau=2$ with a dominant coefficient $\mu_2\geq \mu_1$ destabilizes the dynamics when $\mu_2< \mu_1$. This result is in accordance with the theoretical results given in \cite{nicaise2006stability} for the single wave equation \eqref{singleproblem}.
\end{enumerate}
\begin{figure}[H]
	\centering
	\includegraphics[width=0.88\linewidth]{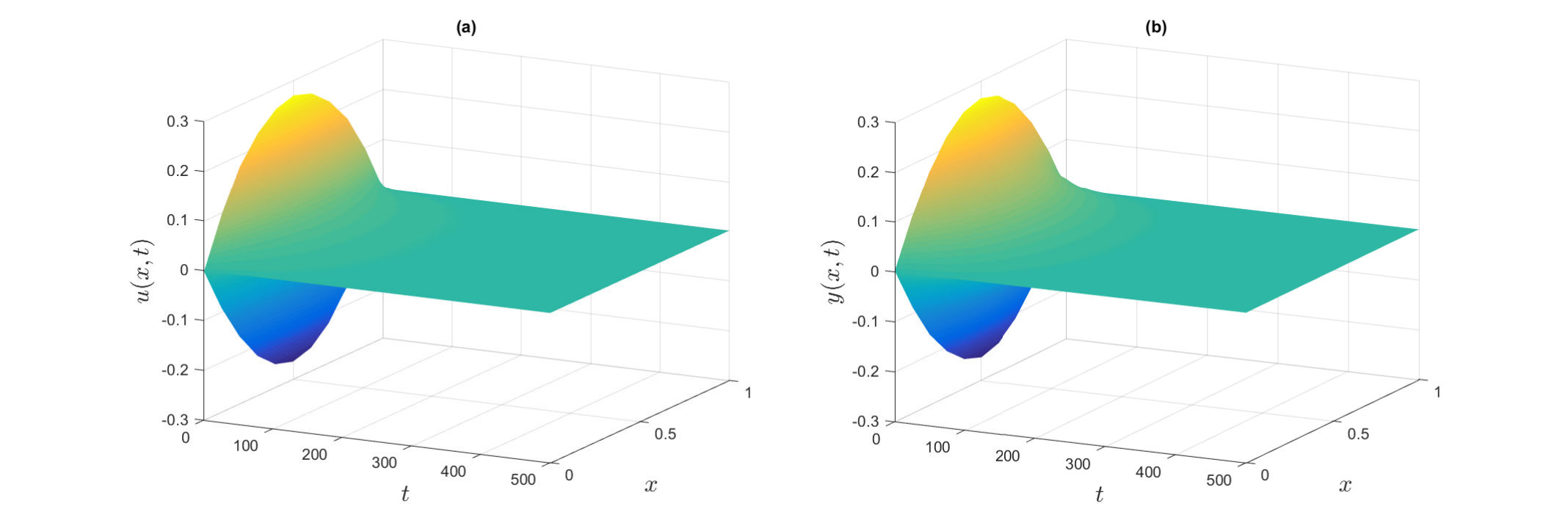}
	\caption{A 3-d landscape of the dynamics of the coupled  wave equation with time  delay $\tau=2$, when $\mu_1=1$ and  $\mu_2=0.5;$ (a) the component $u;$ (b) the component $y.$}
	\label{fig: solution in the stable case}
\end{figure}
\begin{figure}[H]
	\centering
	\includegraphics[width=0.88\linewidth]{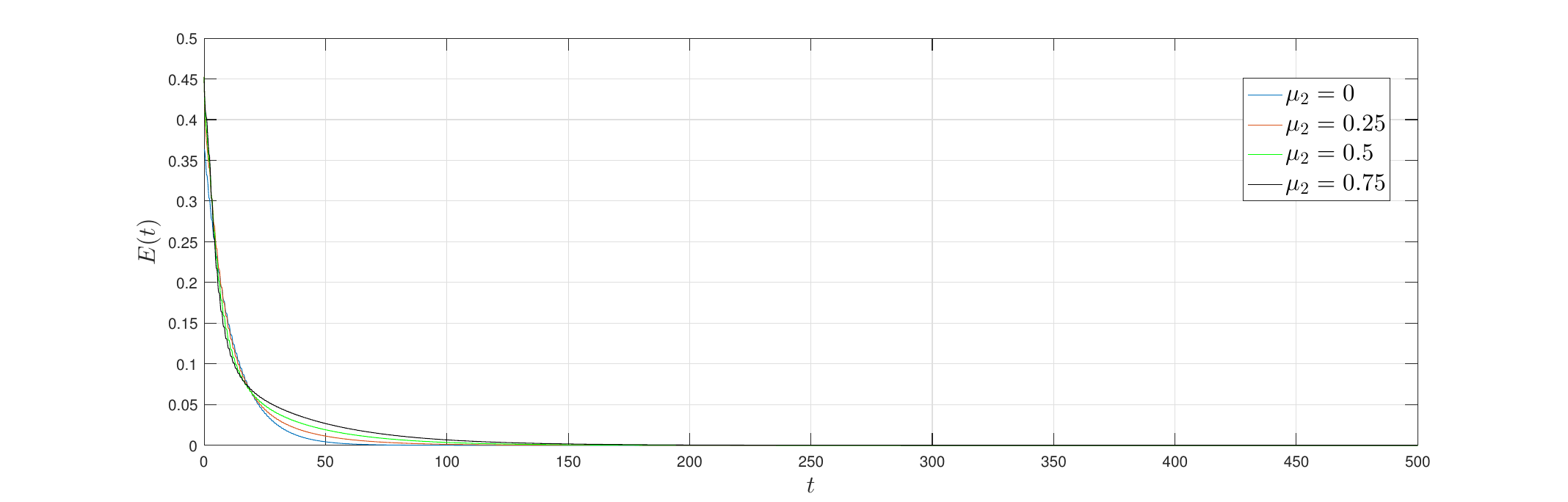}
	\caption{The energy, $E(t)$, of the  coupled waves equations  with time delay $\tau=2$,  when $\mu_1=1$ versus time
		for various values of $\mu_2$.}
	\label{fig: energy in the stable case}
\end{figure}
\begin{figure}[H]
	\centering
	\includegraphics[width=0.88\linewidth]{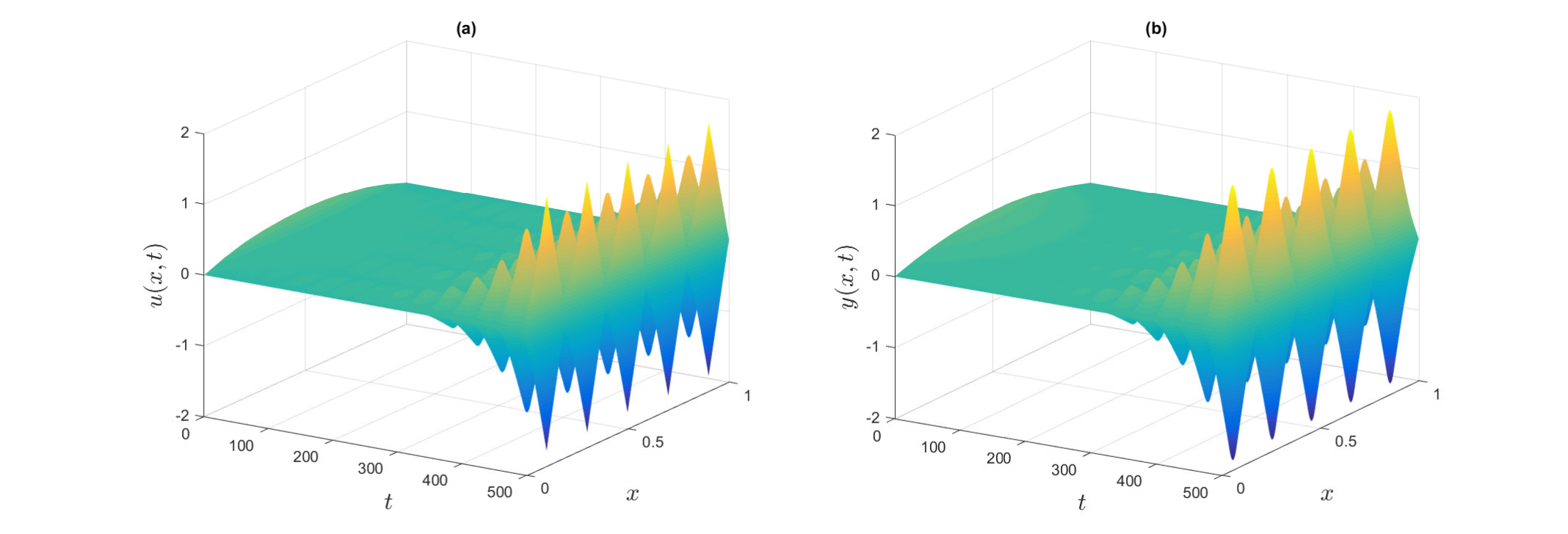}
	\caption{A 3-d landscape of the dynamics of the coupled  wave equation with time  delay $\tau=2$, when  $\mu_1=1,$ $\mu_2=1.2$ and $\xi=0;$ (a) the component $u;$ (b) the component $y$.}
	\label{fig: solution in the unstable case}
\end{figure}
\begin{figure}[H]
	\centering
	\includegraphics[width=0.88\linewidth]{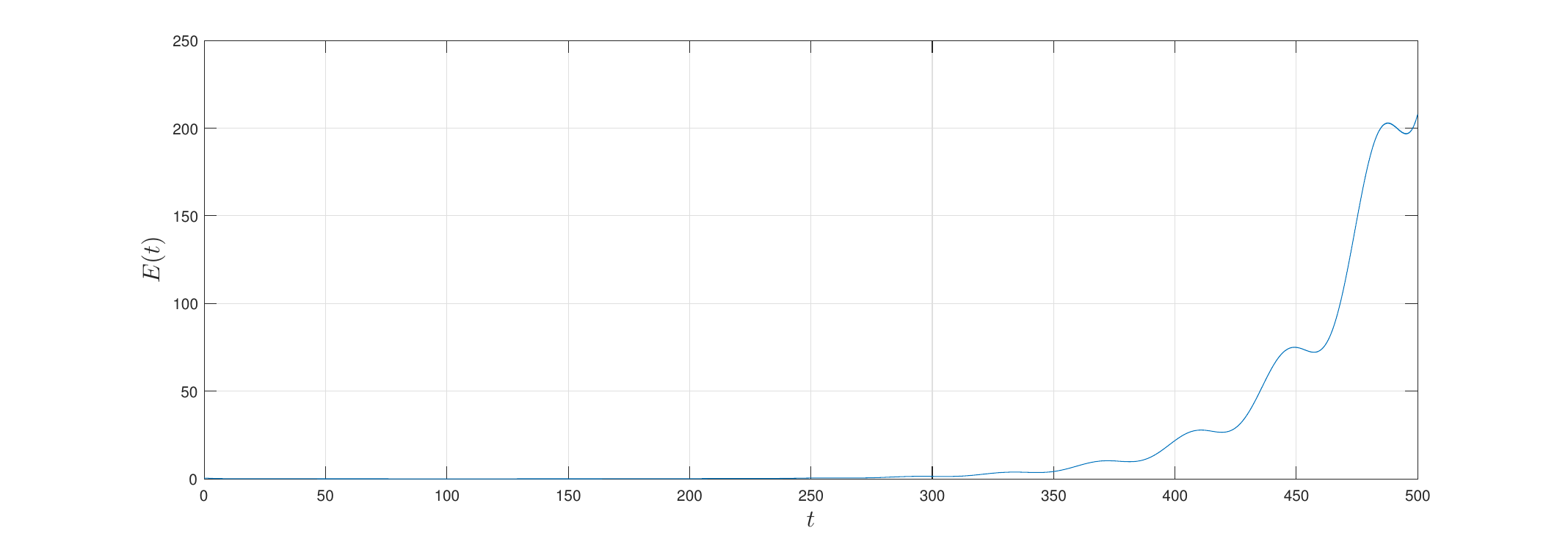}
	\caption{The energy, $E(t)$, of the  coupled waves equations  with time delay $\tau=2$, versus time
		for $\mu_1=1,$  $\mu_2=1.2$ and $\xi=0$.}
	\label{fig: energy in the unstable case}
\end{figure}
\section*{Conflicts of Interest}
The authors declare that they have no conflict of interest.



\bibliographystyle{unsrt}
\bibliography{MPS-bibliography-v}


\end{document}